\newcommand{\CC}{{\mathbb C}}
\def\bege{\begin{equation}} \def\ende{\end{equation}}
\def\begr{\begin{eqnarray}} \def\endr{\end{eqnarray}}
\def\CC{ \mathbb{C}}
\newcommand{\DD}{{\mathbb D}}
\def\R{\mathcal{R}}
\def\T{\mathcal{T}}
\def\D{\mathbb{D}}
\def\N{\mathbb N}
\def\hD{\hat{\mathcal{D}}}
\def\vp{\varphi}
\def\om{\omega}
\def\p{{\prime}}
\def\begr{\begin{eqnarray}} \def\endr{\end{eqnarray}}
\def\msk{\medskip}
\def\ol{\overline}
\newtheorem{Lemma}{Lemma}
\newtheorem{Theorem}{Theorem}
\newtheorem*{th1p}{Theorem \ref{th1}$^\p$}
\newcounter{other}            
\newtheorem{otherth}[other]{Theorem}
\begin{document}
\title[]{ Toeplitz operators and Carleson measure  between weighted Bergman spaces induced by regular weights}

\author{ Juntao Du,     Songxiao Li$\dagger$ and Hasi Wulan}

\address{Juntao Du\\ Department of mathematics, Shantou University, Shantou, Guangdong,  515063, China.}
\email{jtdu007@163.com  }

\address{Songxiao Li\\ Institute of Fundamental and Frontier Sciences, University of Electronic Science and Technology of China,
610054, Chengdu, Sichuan, P.R. China. }
\email{jyulsx@163.com}

\address{Hasi Wulan\\ Department of mathematics, Shantou University, Shantou, Guangdong,  515063, China.}
\email{wulan@stu.edu.cn  }

\subjclass[2010]{30H20,  47B35}
 \begin{abstract}
 In this paper, we give a universal description of  the boundedness and compactness of Toeplitz operator $\mathcal{T}_\mu^\omega$ between   Bergman spaces $A_\eta^p$ and $A_\upsilon^q$ when $\mu$ is a positive Borel measure,  $1<p,q<\infty$ and $\omega,\eta,\upsilon$ are   regular  weights.
By using Khinchin's inequality and Kahane's inequality, we get a new characterization of the Carleson measure for   Bergman spaces induced by regular weights.
 \thanks{$\dagger$ Corresponding author.}
 \thanks{The first author is supported by NSF of Guangdong Province (No. 2022A1515012117) and the corresponding author is supported by NSF of Guangdong Province (No. 2022A1515010317).}
 \vskip 3mm \noindent{\it Keywords}: Bergman space, Carleson measure, Toeplitz operator, regular weight.
 \end{abstract}  \maketitle

\section{Introduction}

Suppose $\om$ is a radial weight (i.e., $\om$ is a positive, measurable and  integrable  function on $[0,1)$ and $\om(z)=\om(|z|)$ for all $z\in\D$, the open unit disc in the complex plane).   We say that  $\om$ is a doubling weight, denoted by $\om\in \hD$,  if $$\hat{\om}(r)\lesssim \hat{\om}(\frac{r+1}{2})$$ for   all $r\in [0,1)$, where  $\hat{\om}(r)=\int_r^1\om(t)dt$.
   $\om$ is a regular weight,  denoted by  $\om\in    \R $, if 
$$\frac{\hat{\om}(r)}{(1-r)\om(r)}\approx 1$$ for   all  $r\in [0,1)$.  Obviously, if $\om\in\R$,  $\om\in \hD$ and $\om(t)\approx \om(s)$   whenever $ s,t\in(0,1)$ satisfying  $1-t\approx 1-s$.   See \cite{Pja2015,PjaRj2014book,PjaRjSk2018jga,PjRj2021adv} for more information about   doubling weights and related topics.

For   $z\in\D$, the Carleson square $S_z$ is defined by
$$S_z=\left\{\xi=re^{i\theta}\in\D: |z|\leq r<1; |\theta-\arg z|<\frac{1-|z|}{2\pi}\right\}.$$
As usual, let $\beta(a,z)=\frac{1}{2}\log\frac{1+|\vp_a(z)|}{1-|\vp_a(z)|}$ denote  the Bergman metric  for $a,z\in\D$ and
 $$D(z,r)=\big\{a\in\D:  ~~\beta(z,a)<r\big\}$$
be the Bergman disk with  radius $r$ and center $z$. Here $\vp_a(z)=\frac{a-z}{1-\ol{a}z}$.
It is easy to check that, for any given $\om\in\R$ and $r\in (0,\infty)$,
\begin{align}\label{0410-1}
\om(D(z,r))\approx \om(S_z)\approx (1-|z|)^2\om(z),\,\, \,\,z\in\D.
\end{align}
For $0<s <\infty$, a sequence $\{a_j\}_{j=1}^\infty\subset\DD$ is  called $s$-separated(or separated, for brief) if $\beta(a_i,a_j)\geq s$ for all $i\neq j$.  A sequence $\{a_j\}_{j=1}^\infty\subset\DD$ is  called  $r$-covering  if  $\D=\cup_{j=1}^\infty D(a_j,r)$ for some $ r\in (0,\infty)$.

Let  $0<p<\infty$ and $\mu$ be a positive Borel measure on $\D$. The Lebesgue space  $L_\mu^p$
 consists of all measurable functions $f$ on $\D$ such that
$$\|f\|_{L_\mu^p}^p=\int_\D  |f(z)|^p d\mu(z) <\infty.$$
In particular, when $d\mu(z)=\om(z)dA(z)$, we denote $L_\mu^p$ by $L^p_\om$. Here $dA $ is the normalized area measure on $\D$.

Let  $0<p<\infty$ and   $\om$ be a  radial weight. Let $H(\D)$ denote  the space of all analytic functions in $\D$. The weighted Bergman space $A_\om^p $  consists of all $f\in H(\D)$  such that
$$\|f\|_{A_\om^p}^p=\int_\D |f(z)|^p\om(z)dA(z)<\infty.
$$
Throughout this paper, we assume that $\hat{\om}(z)>0$ for all $z\in\D$. For otherwise $A_\om^p=H(\D)$.
As usual, we write $A_\alpha^p$ for the standard weighted Bergman space induced by  $\om(z)=(\alpha+1)(1 - |z|^2)^\alpha$ with $-1<\alpha<\infty$.
 For convenience, the weight  $(\alpha+1)(1-|z|^2)^\alpha(-1<\alpha<\infty)$ will be called standard weight.

Let  $\om\in\hD$. It is easy to see that  $A_\om^2$ is a closed
subspace of $L_\om^2$ and the orthogonal Bergman projection  $P_\om$ from  $L_\om^2$ to $A_\om^2$ is given by
$$P_\om f(z)=\int_\D f(\xi)\overline{B_z^\om(\xi)}\om(\xi) dA(\xi),$$
where  $B_z^\om$ is the reproducing kernel of $A_\om^2$.
For more results about the Bergman projection $P_\om$, see  \cite{PjRj2016jmpa,PjRj2021adv}.
For a positive Borel measure $\mu$ on $\D$, the Toeplitz operator associated with $\mu$,   a natural extension of $P_\om$, is defined by $$\T_\mu^\om f(z)=\int_\D f(\xi)\overline{B_z^\om(\xi)}d\mu(\xi).$$
For the standard weight, we will write $P_\om$ and $\T_\mu^\om$ as $P_\alpha$ and $\T_\mu^\alpha$, respectively.

Toeplitz operators on Bergman spaces attract a lot of attentions in the past decades. See \cite[Chapter 7]{zhu} and the references therein for
the theory of Toeplitz operators on  standard weighted Bergman spaces $A^p_\alpha$. In \cite{PjaRj2016}, Pel\'aez and R\"atty\"a  characterized the Schatten class Toeplitz operators $\T_\mu^\om(\om\in\hD)$ on $A_\om^2$.  Pel\'aez, R\"atty\"a  and Sierra\cite{PjaRjSk2018jga} gave several characterizations of bounded and compact  Toeplitz operators $\T_\om^p:A_\om^p\to A_\om^q$ when $\om\in\R$ and $1<p,q<\infty$.
Recently, Duan, Guo, Wang and Wang \cite{DGWW} extended these results to $0<p\leq 1$ and $q=1$. 
See\cite{DW2022, DGWW, Ls2022jmaa, LHT, Lv, ZrZk} for more discussions on this topic.

It is interesting to study the boundedness and compactness of $\T_\mu^\om:A_\eta^p\to A_\upsilon^q$ when $\om,\eta,\upsilon$ are   different     weights.
When $\om,\eta,\upsilon$ are all standard weights, this problem was studied by Pau and Zhao in \cite{PjZr2015mmj}  in the case of the unit ball. Motivated by \cite{PjZr2015mmj},
  we study the boundedness and compactness of $\T_\mu^\om:A_\eta^p\to A_\upsilon^q$ when $1<p,q<\infty$ and $\om,\eta,\upsilon$ are   regular  weights.   Set
\begin{align}\label{0315-1}
\sigma_{p,\eta}(r)=\left(\frac{\om(r)}{\eta(r)^{\frac{1}{p}}}\right)^{p^\p}\,\,\,\,\,\mbox{ and }\,\,\,\,
A(p,\eta)=\sup_{0\leq r<1}\frac{\widehat{\eta}(r)^\frac{1}{p} \widehat{\sigma_{p,\eta}}(r)^\frac{1}{p^\p}}{\widehat{\om}(r)}.
\end{align}
Here $p^\p=\frac{p}{p-1}$ is the conjugate number of $p$. The first result of this paper is stated as follows.

\begin{Theorem}\label{th1}
Suppose $1<p,q <\infty$, $\mu$ is a positive Borel measure on $\D$ and $\om,\eta,\upsilon\in\R$ such that
\begin{align*}
A(p,\eta)<\infty,\,\,\,\,\, A(q,\upsilon)<\infty.
\end{align*}
Let
$$W(z)=\eta(z)^\frac{q}{pq-p+q}\sigma_{q,\upsilon}(z)^\frac{pq-p}{pq-p+q}.$$
Then, $\T_\mu^\om:A_\eta^p\to A_\upsilon^q$ is bounded (compact) if and only if
$\mu$ is a  1-Carleson measure (vanishing 1-Carleson measure) for $A_{W}^{\frac{pq}{pq-p+q}}$. Moreover,
\begin{align*}
\|\T_\mu^\om\|_{A_\eta^p\to A_\upsilon^q}  \approx \|I_d\|_{A_{W}^{\frac{pq}{pq-p+q}}\to L_\mu^1}.
\end{align*}
\end{Theorem}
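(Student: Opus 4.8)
The plan is to convert the operator problem into a bilinear/embedding problem by duality, to recognize the resulting bilinear form as a single‑space $L^1(\mu)$ embedding through a Hölder factorization in which $W$ is exactly the product weight, and to use randomization to defeat the phase obstruction. First I would record the fundamental pairing identity: by Fubini's theorem and the reproducing property of $B_z^\om$, for $f\in A_\eta^p$ and admissible $g$,
\[
\langle \T_\mu^\om f, g\rangle_\om=\int_\D f(\xi)\overline{g(\xi)}\,d\mu(\xi).
\]
I would then invoke (or establish) the description of the dual of $A_\upsilon^q$ under the $\om$‑pairing, namely $(A_\upsilon^q)^*\cong A_{\sigma_{q,\upsilon}}^{q'}$ with equivalent norms, which holds precisely because $A(q,\upsilon)<\infty$ forces the Bergman projection $P_\om$ to be bounded on the relevant $L^q$‑space; the twin hypothesis $A(p,\eta)<\infty$ plays the analogous role on the source side, ensuring $\T_\mu^\om$ is well defined and analytic on $A_\eta^p$. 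Combining these gives
\[
\|\T_\mu^\om\|_{A_\eta^p\to A_\upsilon^q}\approx\sup\Big\{\Big|\int_\D f\overline g\,d\mu\Big|:\ \|f\|_{A_\eta^p}\le1,\ \|g\|_{A_{\sigma_{q,\upsilon}}^{q'}}\le1\Big\}.
\]

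For the sufficiency direction and the inequality $\|\T_\mu^\om\|\lesssim\|I_d\|$, set $s=\frac{pq}{pq-p+q}$, so that $\frac1s=\frac1p+\frac1{q'}$, and observe that the weight in the statement factors as $W=\eta^{s/p}\,\sigma_{q,\upsilon}^{s/q'}$ (a direct check: $s/p=\frac{q}{pq-p+q}$ and $s/q'=\frac{pq-p}{pq-p+q}$). A pointwise Hölder estimate with conjugate exponents $p/s$ and $q'/s$ then gives, for $f\in A_\eta^p$ and $g\in A_{\sigma_{q,\upsilon}}^{q'}$, the product bound $\|fg\|_{A_W^s}\lesssim\|f\|_{A_\eta^p}\|g\|_{A_{\sigma_{q,\upsilon}}^{q'}}$. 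Hence if $\mu$ is a $1$‑Carleson measure for $A_W^s$, then $\big|\int_\D f\overline g\,d\mu\big|\le\int_\D|fg|\,d\mu\lesssim\|fg\|_{A_W^s}\,\|I_d\|_{A_W^s\to L_\mu^1}$, and the displayed supremum is controlled by $\|I_d\|$.

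The necessity direction, which I expect to be the main obstacle, requires reversing this: from mere control of the bilinear form $\big|\int f\overline g\,d\mu\big|$ one must recover the genuine embedding $\int_\D|h|\,d\mu\lesssim\|h\|_{A_W^s}$ for every $h\in A_W^s$. Two difficulties arise. First, one needs a factorization $h=fg$ with $\|f\|_{A_\eta^p}\|g\|_{A_{\sigma_{q,\upsilon}}^{q'}}\lesssim\|h\|_{A_W^s}$; I would produce this from the factorization theory of Bergman spaces induced by regular weights, morally splitting $h$ as $h^{s/p}\cdot h^{s/q'}$ after passing to an atomic or lacunary‑series representation. Second, and more seriously, the bilinear form sees the phase of $f\overline g$, whereas the embedding needs $|fg|=|h|$. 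To bridge this gap I would randomize the factors with Rademacher signs and average, using Khinchin's inequality and Kahane's inequality to replace $\mathbb{E}\big|\int_\D f_\varepsilon\overline{g_\varepsilon}\,d\mu\big|$ (which is dominated by $\|\T_\mu^\om\|$) by a quantity comparable to $\int_\D|h|\,d\mu$, while keeping the randomized families uniformly bounded in $A_\eta^p$ and $A_{\sigma_{q,\upsilon}}^{q'}$. This yields $\|I_d\|_{A_W^s\to L_\mu^1}\lesssim\|\T_\mu^\om\|$ and closes the norm equivalence; the delicate points are the exact matching of weights so that the sign‑randomized sums stay norm‑bounded, and the uniform control supplied by Kahane's inequality.

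Finally, the compactness assertion runs in parallel, with ``bounded'' replaced by ``vanishing'' throughout: one uses that $\T_\mu^\om$ is compact iff $\T_\mu^\om f_n\to0$ in $A_\upsilon^q$ whenever $\{f_n\}$ is norm‑bounded in $A_\eta^p$ and tends to $0$ uniformly on compact subsets of $\D$, together with the little‑oh form of the Carleson testing condition characterizing vanishing $1$‑Carleson measures. The same factorization and Khinchin--Kahane randomization then transfer the vanishing bilinear estimate to the vanishing embedding, giving the compactness equivalence.
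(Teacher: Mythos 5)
Your first half is sound and coincides with the paper's argument: the identity $\langle h,\T_\mu^\om f\rangle_{A_\om^2}=\int_\D h\overline f\,d\mu$, the duality $(A_{\sigma_{q,\upsilon}}^{q'})^*\simeq A_\upsilon^q$ under the $A_\om^2$-pairing (Theorem B, which needs $A(q,\upsilon)<\infty$), and the H\"older factorization with exponents $p/s$, $q'/s$ for $s=\frac{pq}{pq-p+q}$ and $W=\eta^{s/p}\sigma_{q,\upsilon}^{s/q'}$ are exactly how the paper gets $\|\T_\mu^\om\|\lesssim\|I_d\|_{A_W^s\to L_\mu^1}$.

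The necessity direction, however, has two genuine gaps. First, the factorization $h=fg$ with $\|f\|_{A_\eta^p}\|g\|_{A_{\sigma_{q,\upsilon}}^{q'}}\lesssim\|h\|_{A_W^s}$ is invoked from a ``factorization theory'' that is not available in this three-weight setting: Horowitz-type strong factorization concerns a single weight, and the heuristic $h=h^{s/p}\cdot h^{s/q'}$ fails outright because fractional powers of an analytic function with zeros are not analytic. Second, even granting a factorization, the randomization step is not coherent as described: Khinchin's and Kahane's inequalities apply to random sums $\sum_j r_j(t)x_j$, and putting signs on the two factors of a \emph{fixed} product cannot convert control of $\bigl|\int_\D f\overline g\,d\mu\bigr|$ into control of $\int_\D|fg|\,d\mu$. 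The paper circumvents both problems by never factorizing a general $h\in A_W^s$: it tests $\T_\mu^\om$ on reproducing kernels (case $p\le q$) and on Rademacher sums $\sum_j c_jr_j(t)B_{z_j}^\om/\|B_{z_j}^\om\|_{A_\eta^p}$ over a separated covering lattice (case $q<p$, using Lemma 2), where the phase obstruction vanishes automatically because the diagonal value $\T_\mu^\om B_z^\om(z)=\int_\D|B_z^\om(\xi)|^2d\mu(\xi)$ is positive. This produces the \emph{geometric} conditions $\sup_{z}\frac{\mu(D(z,r))}{\om(D(z,r))}\frac{\upsilon(D(z,r))^{1/q}}{\eta(D(z,r))^{1/p}}<\infty$, respectively an $\ell^{\frac{pq}{p-q}}$ summability condition, and only then invokes the known characterization of Carleson measures (Theorem A) to translate these into the $1$-Carleson property for $A_W^s$; your plan bypasses Theorem A and so would have to reprove the Carleson embedding theorem from scratch. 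Relatedly, your uniform treatment misses that the two cases are genuinely different: for $q<p$ the paper shows boundedness already implies compactness (atomic decomposition plus the fact that every bounded operator from $\ell^p$ to $\ell^q$, $q<p$, is compact), which is consistent with Theorem A's assertion that $1$-Carleson and vanishing $1$-Carleson coincide there, whereas your ``parallel'' compactness argument presumes the little-oh scheme of the case $p\le q$ throughout.
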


For a function space $X$ and $0<q<\infty$, a positive measure $\mu$ on $\D$ is called a $q$-Carleson measure (vanishing $q$-Carleson measure) for $X$ if the identity operator $I_d:X\to L_\mu^q$ is bounded (compact).
 When  $0<p,q<\infty$, the $q$-Carleson measure  for $A_\alpha^p$ was characterized by many authors,  we refer to \cite{ZrZk, zhu} and the references therein. When $\om\in\hD$, the problem was completely solved by Pel\'aez and R\"atty\"a in \cite{PjaRj2014book,PjRj2015}.
  See \cite{Co2010jmaa,Lv, PjaRjSk2015mz,PjaRjSk2018jga} for more study of Carleson measure for Bergman spaces induced by various weights.

In \cite{PjZr2015mmj}, Pau and Zhao gave a new characterization of Carleson measures for standard weighted Bergman spaces in the unit ball by using the technique of sublinear operator and   the characterizations of the boundedness and compactness of $\T_\mu^\alpha:A_\beta^p\to A_\gamma^q$.
Here, we extend their result (i.e., \cite[Theorem 1.1]{PjZr2015mmj}) to  Bergman spaces induced by regular weights in a more direct way, without the using of Theorem   \ref{th1}.
    We state it as follows.

\begin{Theorem}\label{th2}
Suppose  $0<p_i,q_i<\infty$, $\om_i\in\R$, $i=1,2,\cdots,n$ and $\mu$ is a positive Borel measure on $\D$.
Let
$$\lambda=\sum_{i=1}^n \frac{q_i}{p_i},\,\,\,\,\,\, \om(z)=\prod\limits_{i=1}^n \om_i(z)^\frac{q_i}{\lambda p_i}.$$
Then $\mu$ is a $\lambda$-Carleson measure for $A_\om^1$ if and only if
\begin{align}\label{th2-r}
M_n=\sup_{f_i\in A_{\om_i}^{p_i},i=1,2,\cdots,n}\frac{\int_\D \prod_{i=1}^n |f_i(z)|^{q_i} d\mu(z)}{\prod_{i=1}^n \|f_i\|_{A_{\om_i}^{p_i}}^{q_i}}<\infty.
\end{align}
Moreover,
$M_n\approx \|I_d\|_{A_\om^{1/\lambda}\to L_\mu^1}.$
\end{Theorem}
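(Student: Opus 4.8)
The plan is to prove the sharper statement $M_n\approx N$, where $N:=\|I_d\|_{A_\om^{1/\lambda}\to L_\mu^1}$ (so that $\|g\|_{A_\om^{1/\lambda}}=(\int_\D|g|^{1/\lambda}\om\,dA)^\lambda$), which clearly implies the equivalence claimed. First I would record the elementary reductions. Put $s_i=\frac{q_i}{\lambda p_i}$, so that $\sum_{i=1}^n s_i=1$ and $\om=\prod_{i=1}^n\om_i^{s_i}$; then $\om\in\R$, and \eqref{0410-1} combined with $\sum_i s_i=1$ gives $\om(S_z)\approx(1-|z|)^2\prod_i\om_i(z)^{s_i}\approx\prod_i\om_i(S_z)^{s_i}$ for $z\in\D$. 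Moreover, since the ratio of the two exponents is the same in both cases, $\mu$ is a $\lambda$-Carleson measure for $A_\om^1$ if and only if it is a $1$-Carleson measure for $A_\om^{1/\lambda}$, so the whole theorem reduces to establishing the two-sided estimate $M_n\approx N$.

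For the estimate $M_n\lesssim N$ I would argue directly. Fix $f_i\in A_{\om_i}^{p_i}$ and set $H=\prod_{i=1}^n|f_i|^{q_i}=\exp\bigl(\sum_i q_i\log|f_i|\bigr)$, which is non-negative and subharmonic. By the characterization of Carleson measures for regular weights in \cite{PjaRj2014book,PjRj2015}, the boundedness of $I_d:A_\om^{1/\lambda}\to L_\mu^1$ is governed by a geometric/averaging condition on $(\mu,\om)$ which, via the sub-mean value property over Bergman disks, also yields the embedding on the cone of non-negative subharmonic functions; in particular $\int_\D H\,d\mu\lesssim N\,(\int_\D H^{1/\lambda}\om\,dA)^\lambda$. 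Since $\frac{q_i}{\lambda}=s_i p_i$ and $\om=\prod_i\om_i^{s_i}$, we have $H^{1/\lambda}\om=\prod_i(|f_i|^{p_i}\om_i)^{s_i}$, and the generalized Hölder inequality with exponents $1/s_i$ gives $\int_\D H^{1/\lambda}\om\,dA\le\prod_i\|f_i\|_{A_{\om_i}^{p_i}}^{p_i s_i}=\prod_i\|f_i\|_{A_{\om_i}^{p_i}}^{q_i/\lambda}$. Raising to the power $\lambda$ yields $\int_\D\prod_i|f_i|^{q_i}\,d\mu\lesssim N\prod_i\|f_i\|_{A_{\om_i}^{p_i}}^{q_i}$, i.e. $M_n\lesssim N$.

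For the reverse estimate $N\lesssim M_n$ I would discretize. Fix a separated, $r$-covering sequence $\{a_j\}$, write $b_k=\mu(D(a_k,r))$, and recall from \cite{PjaRj2014book,PjRj2015} that $N\approx Q_\lambda$, where $Q_\lambda=\sup_k b_k\om(S_{a_k})^{-\lambda}$ when $\lambda\ge1$ and $Q_\lambda=(\sum_k(b_k\om(S_{a_k})^{-\lambda})^{1/(1-\lambda)})^{1-\lambda}$ when $\lambda<1$. For each $i$ take the standard test functions $\{f_{i,a_j}\}_j$ for $A_{\om_i}^{p_i}$, normalized so that $\|f_{i,a_j}\|_{A_{\om_i}^{p_i}}\approx1$ and $|f_{i,a_j}(z)|\gtrsim\om_i(S_{a_j})^{-1/p_i}$ on $D(a_j,r)$, and choose mutually independent Rademacher sequences $\{\varepsilon_j^{(i)}\}$. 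Applying the defining inequality of $M_n$ to the tuple $f_i=\sum_j\varepsilon_j^{(i)}c_{i,j}f_{i,a_j}$ (with $c_{i,j}\ge0$ free) for each fixed sign realization and taking expectations, the independence of the $\varepsilon^{(i)}$ makes both sides factor over $i$. Khinchin's inequality applied in the variable $\varepsilon^{(i)}$ for each fixed $z$ gives $\mathbb{E}\int_\D\prod_i|f_i|^{q_i}\,d\mu\approx\int_\D\prod_i(\sum_j c_{i,j}^2|f_{i,a_j}|^2)^{q_i/2}\,d\mu\gtrsim\sum_k b_k\om(S_{a_k})^{-\lambda}\prod_i c_{i,k}^{q_i}$, using the test-function lower bounds on $D(a_k,r)$ and $\prod_i\om_i(S_{a_k})^{q_i/p_i}\approx\om(S_{a_k})^\lambda$; while Kahane's inequality (to compare the $q_i$-th and $p_i$-th moments of $\|f_i\|_{A_{\om_i}^{p_i}}$), Khinchin's inequality, and the $\ell^{p_i}$-behaviour of test functions along a separated sequence give $\mathbb{E}\prod_i\|f_i\|_{A_{\om_i}^{p_i}}^{q_i}\lesssim\prod_i(\sum_j c_{i,j}^{p_i})^{q_i/p_i}$. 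Taking $c_{i,j}=t_j^{1/p_i}$ collapses both bounds into $\sum_k b_k\om(S_{a_k})^{-\lambda}t_k^\lambda\lesssim M_n(\sum_j t_j)^\lambda$ for all $t_k\ge0$, and optimizing over $t$ (mass on one index when $\lambda\ge1$, and $t_k\propto(b_k\om(S_{a_k})^{-\lambda})^{1/(1-\lambda)}$ when $\lambda<1$) recovers $Q_\lambda\lesssim M_n$, hence $N\lesssim M_n$.

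The hard part will be this reverse estimate, and specifically the multilinear randomization: one must use \emph{independent} sign sequences so that both the integral against $\mu$ and the product of norms factor over $i$, apply Khinchin pointwise in $z$ inside the $\mu$-integral while applying Kahane to the norm functionals, control the square functions $(\sum_j c_{i,j}^2|f_{i,a_j}|^2)^{1/2}$ uniformly irrespective of whether $p_i\le2$ or $p_i\ge2$, and finally perform the optimization over $t$ so that it reproduces exactly the two regimes $\lambda\ge1$ and $\lambda<1$ of the Carleson characterization of $N$. The embedding step for subharmonic functions in the direction $M_n\lesssim N$ and the test-function estimates for regular weights are routine given \cite{PjaRj2014book,PjRj2015}, so the genuine work is concentrated in the randomized lower bound.
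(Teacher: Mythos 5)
Your proposal is correct in substance, but it takes a genuinely different route from the paper in \emph{both} directions. For the direction $M_n\lesssim N$, the paper never invokes subharmonic functions: it iterates Theorem \ref{thA} $n$ times, absorbing one factor at a time into a chain of induced measures $d\mu_1=\prod_{i\geq2}|h_i|\,d\mu$, $d\mu_2=|f_1|^{q_1}|h_3\cdots h_n|\,d\mu,\dots$, each step converting a $1$-Carleson bound into a $q_i$-Carleson bound for analytic functions only. Your one-shot argument (geometric condition $\Rightarrow$ embedding for the product $H=\prod_i|f_i|^{q_i}$, then generalized H\"older) is cleaner, but note the precision it requires: the covering argument must be run on $G=H^{1/\lambda}$, and for $\lambda>1$ the sub-mean value property of $G$ does \emph{not} follow from subharmonicity of $H$ alone; it holds here because $H$ is log-subharmonic (a product of powers of moduli of analytic functions), so every positive power of $H$ is subharmonic. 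Since Theorem \ref{thA} is stated for analytic functions, this step is a re-run of its sufficiency proof rather than a black-box citation. For the direction $N\lesssim M_n$, the paper splits cases: for $\lambda\geq1$ it uses a single deterministic test function $F_a$, and for $\lambda<1$ it proceeds by induction on $n$, randomizing only the last function $f_n$, invoking the potential-type condition (iiid) of Theorem \ref{thA} and duality of sequence spaces. Your unified multilinear randomization with $n$ independent Rademacher sequences, Khinchin applied pointwise inside the $\mu$-integral, the atomic-decomposition upper bound $\|\sum_j\varepsilon_jc_{i,j}f_{i,a_j}\|_{A_{\om_i}^{p_i}}\lesssim\|c_i\|_{l^{p_i}}$ (which in fact holds for every fixed sign pattern, so Kahane is not even needed there), and the scalar optimization of $\sum_k d_kt_k^\lambda\lesssim M_n(\sum_jt_j)^\lambda$ handles all $\lambda$ and all $n$ at once, with no induction and no use of condition (iiid); it is closer in spirit to Pau--Zhao's unit-ball argument than to the paper's proof. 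What the paper's route buys is that each ingredient is a literal citation of Theorem \ref{thA} for analytic functions; what yours buys is symmetry and uniformity across the regimes $\lambda\geq1$ and $\lambda<1$, at the cost of the multilinear bookkeeping and two details you should make explicit: work first with finitely supported coefficient sequences (so the division by $\bigl(\sum_kd_k^{1/(1-\lambda)}\bigr)^{\lambda}$ is legitimate when $\lambda<1$), and pass from the discrete quantity $Q_\lambda$ over the separated covering lattice to $N$ via the ``for some, equivalently for all $r$'' clause of Theorem \ref{thA}.
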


The analogous characterizations of vanishing $\lambda$-Carleson measures for $A_\om^1(\om\in\R)$ can be state as follows.  \msk

\begin{Theorem}\label{th3}
Suppose $0<p_i,q_i<\infty$, $\om_i\in\R$, $i=1,2,\cdots,n$  and $\mu$ is a positive Borel measure on $\D$.
Let
$$\lambda=\sum_{i=1}^n \frac{q_i}{p_i},\,\,\,\,\,\, \om(z)=\prod\limits_{i=1}^n \om_i(z)^\frac{q_i}{\lambda p_i}.$$
Then the following statements are equivalent.
\begin{enumerate}[(i)]
  \item $\mu$ is a vanishing $\lambda$-Carleson measure for $A_\om^1$;
  \item If $\{f_{1,k}\}_{k=1}^\infty$ is bounded in $A_{\om_1}^{p_1}$ and converges to 0 uniformly on compact subsets of\, $\D$,
   $$\lim_{k\to \infty} F(k)=0,$$
   where
   $$F(k)=\sup\left\{\int_\D |f_{1,k}(z)|^{q_1}\prod_{i=2}^n |f_i(z)|^{q_i}d\mu(z):\|f_i\|_{A_{\om_i}^{p_i}}\leq 1,i=2,\cdots,n\right\};$$
  \item For any bounded sequences $\{f_{1,k}\}_{k=1}^\infty$, $\cdots$, $\{f_{n,k}\}_{k=1}^\infty$ in $A_{\om_1}^{p_1}$, $\cdots$, $A_{\om_2}^{p_n}$, respectively, all of which  converge to 0 uniformly on compact subsets of $\D$,
      $$\lim_{k\to \infty}\int_{\D} \prod_{i=1}^n |f_{i,k}(z)|^{q_i} d\mu(z)=0.$$
\end{enumerate}
\end{Theorem}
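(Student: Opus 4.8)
The plan is to establish the chain of implications (i) $\Rightarrow$ (ii) $\Rightarrow$ (iii) $\Rightarrow$ (i), which closes the equivalence. Throughout I would exploit the structural identity $\om(S_a)^\lambda\approx\prod_{i=1}^n\om_i(S_a)^{q_i/p_i}$, which follows from \eqref{0410-1} together with $\sum_{i=1}^n q_i/(\lambda p_i)=1$, and the geometric reading of (i): for regular weights $\mu$ is a vanishing $\lambda$-Carleson measure for $A_\om^1$ exactly when $\varepsilon(\rho):=\sup_{|a|>\rho}\mu(S_a)/\om(S_a)^\lambda\to0$ as $\rho\to1^-$ (the bounded form of this is what underlies Theorem \ref{th2} and the Carleson characterizations recalled in the introduction). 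The two analytic facts that do the real work are the subharmonic mean-value estimate $|f_i(z)|^{q_i}\lesssim\bigl(\om_i(S_a)^{-1}\int_{D(a,2r)}|f_i|^{p_i}\om_i\,dA\bigr)^{q_i/p_i}$ valid on $D(a,r)$, and the existence of a finite-overlap $r$-covering $\{a_j\}$ of $\D$.

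The implication (ii) $\Rightarrow$ (iii) is immediate. If every $\{f_{i,k}\}$ is bounded in $A_{\om_i}^{p_i}$ and tends to $0$ uniformly on compact subsets of $\D$, then $\{f_{1,k}\}$ is admissible in (ii), and dividing each $f_{i,k}$ with $i\ge2$ by its norm yields $\int_\D\prod_i|f_{i,k}|^{q_i}\,d\mu\le\bigl(\prod_{i\ge2}\|f_{i,k}\|_{A_{\om_i}^{p_i}}^{q_i}\bigr)F(k)\lesssim F(k)\to0$.

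For (i) $\Rightarrow$ (ii) I would split each competing integral at a radius $\rho$. On $\{|z|\le\rho\}$ point evaluations in Bergman spaces are uniformly bounded, so $\prod_{i\ge2}|f_i|^{q_i}\le C_\rho$ uniformly over the unit balls, while $\sup_{|z|\le\rho}|f_{1,k}|\to0$; hence the inner contribution tends to $0$ uniformly in the competing $f_i$. On $\{|z|>\rho\}$ I would pass to a finite-overlap covering by disks $D(a_j,r)$ with $|a_j|$ near $\partial\D$, apply the subharmonic estimate factor by factor, bound $\mu(D(a_j,r))\approx\mu(S_{a_j})\le\varepsilon(\rho)\,\om(S_{a_j})^\lambda$, and then use the identity for $\om(S_{a_j})^\lambda$ to redistribute the weights as $\prod_i\bigl(\int_{D(a_j,2r)}|f_i|^{p_i}\om_i\,dA\bigr)^{q_i/p_i}$. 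A final H\"older inequality over $j$ with exponents $\lambda p_i/q_i$ (whose reciprocals sum to $1$), combined with the finite overlap, bounds the boundary part by $\varepsilon(\rho)\prod_i\|f_i\|_{A_{\om_i}^{p_i}}^{q_i}\lesssim\varepsilon(\rho)$, uniformly over the unit balls. Letting $k\to\infty$ and then $\rho\to1$ forces $F(k)\to0$. Finally, (iii) $\Rightarrow$ (i) is a test-function argument: were (i) to fail, there would exist $\delta>0$ and $a_k\to\partial\D$ with $\mu(S_{a_k})/\om(S_{a_k})^\lambda\ge\delta$; taking the boundary-concentrated functions $f_{i,k}(z)=\om_i(S_{a_k})^{-1/p_i}\bigl(\tfrac{1-|a_k|}{1-\overline{a_k}z}\bigr)^{M_i}$ with $M_i$ large, each sequence is bounded in $A_{\om_i}^{p_i}$ and tends to $0$ uniformly on compacts, yet $\int_\D\prod_i|f_{i,k}|^{q_i}\,d\mu\gtrsim\mu(S_{a_k})/\prod_i\om_i(S_{a_k})^{q_i/p_i}\approx\mu(S_{a_k})/\om(S_{a_k})^\lambda\ge\delta$, contradicting (iii).

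I expect the main obstacle to be the boundary estimate inside (i) $\Rightarrow$ (ii): arranging the splitting and the multi-factor H\"older step so that the resulting bound is genuinely uniform over all competing $f_i$ in their unit balls (so that a supremum over them may be taken before passing to the limit). A secondary technical point is the dependence on $\lambda$: the argument as described uses $\lambda\ge1$ to control $\sum_j\bigl(\int_{D(a_j,2r)}|f_i|^{p_i}\om_i\,dA\bigr)^{\lambda}$ by the finite overlap, and when $\lambda<1$ one must replace the crude quantity $\varepsilon(\rho)$ and the simple sup-condition by the tail of the corresponding $\ell^{1/(1-\lambda)}$-summability condition on $\{\mu(D(a_j,r))/\om(S_{a_j})\}_j$, the little-oh version of the $p>q$ Carleson characterization.
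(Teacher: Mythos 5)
Your overall chain (i)$\Rightarrow$(ii)$\Rightarrow$(iii)$\Rightarrow$(i) matches the paper's, and two of the three legs are fine: (ii)$\Rightarrow$(iii) is indeed immediate, and your (i)$\Rightarrow$(ii) works, although the paper gets it more cheaply by applying Theorem \ref{th2} to the truncated measure $d\mu_{s}=\chi_{\{|z|\ge s\}}\,d\mu$ (Theorem \ref{th2} already packages the bound uniformly over the competing unit balls, so no covering/H\"older argument is needed); your hand-made version, with the $\ell^{1/(1-\lambda)}$ tail condition replacing $\varepsilon(\rho)$ when $\lambda<1$, essentially re-proves that special case of Theorem \ref{th2}.

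The genuine gap is in (iii)$\Rightarrow$(i) for $0<\lambda<1$. Your contradiction opens with: were (i) to fail, there would exist $\delta>0$ and $a_k\to\partial\D$ with $\mu(S_{a_k})/\om(S_{a_k})^{\lambda}\ge\delta$. That dichotomy is exactly the assertion that the vanishing of the pointwise ratio implies the vanishing $\lambda$-Carleson property, i.e.\ Theorem \ref{thA}(ii), which is available only when $\lambda\ge1$ (the case $p\le q$). When $\lambda<1$ the (vanishing) Carleson property is characterized by the integrability condition of Theorem \ref{thA}(iii), not by any pointwise ratio: there are measures with $\mu(S_a)/\om(S_a)^{\lambda}\to0$ that are \emph{not} $\lambda$-Carleson for $A_\om^1$ (e.g.\ suitably weighted point masses on a lattice for which $\sum_j\bigl(\mu(D(a_j,r))/\om(D(a_j,r))^{\lambda}\bigr)^{1/(1-\lambda)}$ diverges while each term tends to $0$). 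So for $\lambda<1$ the failure of (i) produces no sequence $a_k$ at all, and test functions concentrated on single Carleson boxes cannot detect it; detecting it requires spread-out test functions, i.e.\ the Khinchin--Kahane random-sum machinery, which is precisely the hard direction of Theorem \ref{th2}. The paper sidesteps this with a normalization trick: if $\mu$ were not $\lambda$-Carleson, then $M_n=\infty$ by Theorem \ref{th2}, so for each $k$ there are $f_{i,k}$ in the unit balls with $\int_\D\prod_i|f_{i,k}(z)/k|^{q_i}\,d\mu(z)>1$; the functions $f_{i,k}/k$ are bounded and tend to $0$ uniformly on compacta, contradicting (iii). Boundedness then upgrades to the vanishing property for free in this range by Theorem \ref{thA}(iii). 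Your closing remark about replacing $\varepsilon(\rho)$ by the $\ell^{1/(1-\lambda)}$ tail repairs only the (i)$\Rightarrow$(ii) leg for small $\lambda$; it does not repair (iii)$\Rightarrow$(i), so as written your proof establishes the theorem only for $\lambda\ge1$.
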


The paper is organized as follows. In Section 2, we state some preliminary results and lemmas, which will be used later.
Section 3 is devoted to the proof of main results in this paper.

Throughout this paper,  the letter $C$ will denote  constants and may differ from one occurrence to the other.
For two real valued functions $f$ and $g$, we write   $f \lesssim g$ if there is a positive constant $C$, independent of argument,  such that $f\leq Cg$.   $f\approx g$ means that $f \lesssim g$  and $g \lesssim f$.

\section{Preliminaries}

To prove our main results in this paper, we need some lemmas.  First, we collect some characterizations of $q$-Carleson measure for $A_\om^p$ $(\om\in\R)$ from  \cite{Co2010jmaa, PjaRjSk2015mz,PjaRjSk2018jga}   as follows.

\noindent\begin{otherth}\label{thA}
Let $0<p,q<\infty$, $\om\in\R$ and $\mu$ be a positive Borel measure on $\D$.
Then, the following statements hold.
\begin{enumerate}[(i)]
  \item When $p\leq q$,   the following statements are equivalent:
  \begin{enumerate}
    \item[(ia)] $\mu$ is a $q$-Carleson measure for $A_\om^p$;
    \item[(ib)] $\sup\limits_{z\in\D}\frac{\mu(S_z)}{\om(S_z)^\frac{q}{p}}<\infty$;
    \item[(ic)] $\sup\limits_{z\in\D}\frac{\mu(D(z,r))}{\om(D(z,r))^\frac{q}{p}}<\infty$ for some (equivalently, for all) $r\in (0,\infty)$.
  \end{enumerate}
   Moreover,
   $$\|I_d\|_{A_\om^p\to L_\mu^q}^q\approx \sup_{z\in\D}\frac{\mu(S_z)}{\om(S_z)^\frac{q}{p}} \approx \sup_{z\in\D}\frac{\mu(D(z,r))}{\om(D(z,r))^\frac{q}{p}}.$$
    \item When $p\leq q$,   the following statements are equivalent:
          \begin{enumerate}
          \item[(iia)] $\mu$ is a vanishing $q$-Carleson measure for $A_\om^p$;
          \item[(iib)] $\lim\limits_{|z|\to 1}\frac{\mu(S_z)}{\om(S_z)^\frac{q}{p}}=0$;
          \item[(iic)] $\lim\limits_{|z|\to 1}\frac{\mu(D(z,r))}{\om(D(z,r))^\frac{q}{p}}=0$ for some (equivalently, for all) $r\in (0,\infty)$.
          \end{enumerate}
  \item When $q<p$, the following statements are equivalent:
          \begin{enumerate}
          \item[(iiia)] $\mu$ is a $q$-Carleson measure for $A_\om^p$;
          \item[(iiib)] $\mu$ is a vanishing $q$-Carleson measure for $A_\om^p$;
          \item[(iiic)] for any given $r\in (0,\infty)$, $\Phi(z)=\frac{\mu(D(z,r))}{\om(D(z,r))}\in L_\om^\frac{p}{p-q}$;
          \item[(iiid)] for each sufficiently large $\gamma>1$, the function
               $$\Psi(z)=\int_\D \left(\frac{1-|\xi|}{|1-\overline{z}\xi|}\right)^\gamma \frac{d\mu(\xi)}{\om(S_\xi)},\,\,\,z\in\D, $$
                belongs to $L_\om^\frac{p}{p-q}$.
          \end{enumerate}
  Moreover,
  $$\|I_d\|_{A_\om^p\to L_\mu^q}^q \approx \|\Phi\|_{L_\om^\frac{p}{p-q}} \approx \|\Psi\|_{L_\om^\frac{p}{p-q}}.$$
\end{enumerate}
\end{otherth}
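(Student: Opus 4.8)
The plan is to prove the three regimes separately, using throughout the two structural facts available for $\om\in\R$: the comparison \eqref{0410-1}, namely $\om(D(z,r))\approx\om(S_z)\approx(1-|z|)^2\om(z)$, and a family of localized test functions. For each $a\in\D$ I would fix a large exponent $b$ and set
$$f_a(z)=\frac{1}{\om(S_a)^{1/p}}\left(\frac{1-|a|}{1-\overline{a}z}\right)^{b},$$
so that $\|f_a\|_{A_\om^p}\approx 1$ (via the standard integral estimate $\int_\D|1-\overline{a}z|^{-bp}(1-|a|)^{bp}\om(z)\,dA(z)\approx\om(S_a)$, valid for $bp$ large when $\om\in\R$) while $|f_a(z)|^p\approx\om(S_a)^{-1}$ for $z\in D(a,r)$ and $|f_a|$ decays off $S_a$. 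I would also fix, once and for all, an $s$-separated $r$-covering $\{a_j\}$, a \emph{lattice}, for which the disks $\{D(a_j,2r)\}$ have bounded overlap; subharmonicity in the form $|f(z)|^p\lesssim\om(D(z,r))^{-1}\int_{D(z,2r)}|f|^p\om\,dA$ (again via \eqref{0410-1}) will convert pointwise bounds into $A_\om^p$-norm bounds.

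For the range $p\le q$ (parts (i) and (ii)) I would argue as follows. Necessity $(ia)\Rightarrow(ib)$ is immediate from testing $I_d$ on $f_a$: boundedness gives $\mu(S_a)\om(S_a)^{-q/p}\lesssim\int_{S_a}|f_a|^q\,d\mu\le\|I_d\|^q$. The equivalence $(ib)\Leftrightarrow(ic)$ follows from \eqref{0410-1} together with the fact that each Bergman disk sits inside a comparable Carleson square and vice versa. For sufficiency $(ic)\Rightarrow(ia)$ I would decompose $\int_\D|f|^q\,d\mu=\sum_j\int_{D(a_j,r)}|f|^q\,d\mu$, bound each term by $\mu(D(a_j,r))\sup_{D(a_j,r)}|f|^q$, insert the subharmonic estimate and the hypothesis $\mu(D(a_j,r))\lesssim\om(D(a_j,2r))^{q/p}$, and finally sum using $q/p\ge1$ and $\sum_jb_j^{q/p}\le(\sum_jb_j)^{q/p}$ with $b_j=\int_{D(a_j,2r)}|f|^p\om\,dA$ controlled by bounded overlap. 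The vanishing statements in (ii) would be handled by the same two computations after recording the standard compactness criterion (a bounded sequence tending to $0$ uniformly on compact sets has $\|f_k\|_{L_\mu^q}\to0$), splitting $\D$ into a compact part and a tail $|z|>\rho$, and letting $\rho\to1$.

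The range $q<p$ (part (iii)) is where the real work lies. For sufficiency $(iiic)\Rightarrow(iiia)$ I would run the same decomposition but, since now $q/p<1$, apply H\"older with the conjugate pair $\big(\tfrac{p}{q},\tfrac{p}{p-q}\big)$ to the sum over $j$; the exponents are arranged exactly so that the resulting weight power is $\om(D(a_j,2r))^{(1-q/p)\frac{p}{p-q}}=\om(D(a_j,2r))$, which reassembles $\|\Phi\|_{L_\om^{p/(p-q)}}$ after a short averaging lemma giving $\Phi(a_j)^{p/(p-q)}\om(D(a_j,2r))\lesssim\int_{D(a_j,2r)}\Phi^{p/(p-q)}\om\,dA$. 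The hard direction is necessity $(iiia)\Rightarrow(iiic)$, and here I would use Khinchin's inequality in Luecking's style: form the randomized test function $F_t(z)=\sum_jc_jr_j(t)f_{a_j}(z)$ with Rademacher functions $r_j$ and free nonnegative coefficients $c_j$, apply the Carleson inequality to $F_t$, integrate in $t$, and replace both sides by square functions via Khinchin (and Kahane's inequality to make the $L^q_t$ and $L^p_t$ moments of $\|F_t\|_{A_\om^p}$ comparable). This yields
$$\sum_kc_k^q\,\frac{\mu(D(a_k,r))}{\om(S_{a_k})^{q/p}}\lesssim\|I_d\|^q\Big(\sum_kc_k^p\Big)^{q/p},$$
and duality against $\ell^{p/q}$ (whose conjugate index is $\tfrac{p}{p-q}$) forces $\sum_k\big(\mu(D(a_k,r))/\om(S_{a_k})^{q/p}\big)^{p/(p-q)}<\infty$, which is exactly $\Phi\in L_\om^{p/(p-q)}$. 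The coincidence $(iiia)\Leftrightarrow(iiib)$ then comes for free, since the membership $\Phi\in L^{p/(p-q)}_\om$ already carries the tail decay needed for compactness. Finally $(iiic)\Leftrightarrow(iiid)$ is a purely real-variable comparison: $\Psi$ is a Forelli--Rudin smoothing of $d\mu/\om(S_\cdot)$, and a Schur-type estimate shows $\|\Psi\|_{L_\om^{p/(p-q)}}\approx\|\Phi\|_{L_\om^{p/(p-q)}}$ once $\gamma$ is large.

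The main obstacle I anticipate is the necessity half of the $q<p$ case: getting the upper bound $\int_\D\big(\sum_jc_j^2|f_{a_j}|^2\big)^{p/2}\om\,dA\lesssim\sum_jc_j^p$ for the randomized function requires an almost-orthogonality estimate for the family $\{f_{a_j}\}$ in $A_\om^p$ that is delicate for a general regular weight (one must split the cases $p\ge2$ and $p<2$ and exploit the bounded overlap together with the kernel decay), and coordinating Khinchin's and Kahane's inequalities so that the random argument closes cleanly. A secondary technical point is establishing the test-function norm estimate $\|f_a\|_{A_\om^p}\approx1$ and the averaging lemma for $\Phi$ uniformly in $a$, both of which rest on the regularity $\om\in\R$ rather than on an explicit formula for $\om$.
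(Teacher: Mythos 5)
You cannot be graded against an internal proof here, because the paper does not prove this statement: it is Theorem~A, explicitly \emph{collected} from \cite{Co2010jmaa,PjaRjSk2015mz,PjaRjSk2018jga} (see also \cite{PjRj2015}) at the start of Section~2, and used as a black box afterwards. What you have written is, in substance, a correct reconstruction of the standard Luecking-type proof that those references adapt to regular weights: normalized test functions $f_a$ with $\|f_a\|_{A_\om^p}\approx 1$ (this is Lemma~2.4 of \cite{PjaRj2014book}, which the paper itself invokes in the proofs of Lemma~\ref{lm2} and Theorem~\ref{th2}), a separated $r$-covering lattice with bounded overlap, subharmonicity via (\ref{0410-1}) to pass from pointwise to norm bounds, H\"older over the lattice with the pair $\bigl(\tfrac{p}{q},\tfrac{p}{p-q}\bigr)$ for sufficiency when $q<p$, and Rademacher randomization plus $\ell^{p/q}$--$\ell^{p/(p-q)}$ duality for necessity. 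Your exponent bookkeeping is consistent: $\bigl(\mu(D)/\om(D)^{q/p}\bigr)^{p/(p-q)}=\bigl(\mu(D)/\om(D)\bigr)^{p/(p-q)}\om(D)$, so the discrete condition you extract is exactly $\Phi\in L_\om^{p/(p-q)}$, and the comparison of $\Psi$ with $\Phi$ by a Forelli--Rudin/Schur estimate for large $\gamma$ is indeed how (iiid) is treated in \cite{PjaRjSk2015mz,PjaRjSk2018jga}. The same randomization-plus-duality scheme reappears inside the paper in the implication (iiib)$\Rightarrow$(iiic) of Theorem~\ref{th1}$^\p$ and in the proof of Theorem~\ref{th2}, so your route is the one the authors themselves rely on.

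Two remarks on the points you flag as delicate. First, Kahane's inequality is not actually needed in your necessity argument: since $\|F_t\|_{A_\om^p}\lesssim\|c\|_{\ell^p}$ uniformly in $t$, you can apply the Carleson embedding to each $F_t$, integrate in $t$, and use Khinchin only on the $\mu$-side; Kahane is required where a $t$-integral of a norm must be compared at two different outer powers (as in the paper's proof of Theorem~\ref{th2}, where the exponent $\tfrac{q_n}{1-\lambda_{n-1}}$ differs from $q_n$), not here. Second, the bound $\bigl\|\sum_j c_j r_j(t) f_{a_j}\bigr\|_{A_\om^p}\lesssim\|c\|_{\ell^p}$ that you anticipate as the main obstacle is precisely the upper half of the atomic decomposition for $A_\om^p$, valid for separated sequences once the kernel exponent $b$ is large; the paper quotes it as Theorem~3.2 of \cite{ZxXlFhLj2014amsc}, and for $1<p<\infty$ it also follows from the duality argument of Lemma~\ref{lm2}, so no new almost-orthogonality estimate has to be built. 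The only places where you should still spell out details are the vanishing statements (ii) and (iiib), which need the truncation $d\mu_s=\chi_{\{|z|\geq s\}}\,d\mu$ together with the compactness criterion of Lemma~\ref{lm-compact}-type for $I_d$, exactly as the paper does in part (ii) of the proof of Theorem~\ref{th1}$^\p$; with those routine steps included, your plan closes.
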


 The duality relation between weighted Bergman space via $A_\om^2$-pairing  plays a very important role in the proof of our main results in this paper.
  See \cite{Zk1994} and \cite[Theorem 2.11]{zhu2} for related results for standard weighted Bergman spaces.
For Bergman spaces induced by radial weights, we refer to \cite{PjRj2021adv,PaRj2017,RfOpPjRj2021}.
The following theorem comes from \cite[Theorem 13]{PjRj2021adv} and \cite[Theorem 3]{RfOpPjRj2021}.

\begin{otherth}\label{thB}
Suppose $1<p<\infty$ and $\om,\eta\in\R$.  Then the following statements are equivalent:
\begin{enumerate}[(i)]
  \item $A(p,\eta)<\infty$;
  \item $P_\om:L_\eta^p\to L_\eta^p$ is bounded;
  \item $(A_{\sigma_{p,\eta}}^{p^\p})^*\simeq A_\eta^p$ via $A_\om^2$-pairing
  $$\langle f,g\rangle_{A_\om^2}=\int_\D f(z)\ol{g(z)}\om(z)dA(z),\,\,\,\,\forall\, f\in A_{\sigma_{p,\eta}}^{p^\p}, \,\,
  g\in A_\eta^p,$$
  with equivalent norms. Here $\sigma_{p,\eta}$ and $A(p,\eta)$ are defined as in (\ref{0315-1}).
\end{enumerate}
\end{otherth}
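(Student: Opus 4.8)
The plan is to prove the chain (i)$\Rightarrow$(ii)$\Rightarrow$(iii)$\Rightarrow$(i), concentrating the genuinely analytic work in the step (i)$\Rightarrow$(ii) and treating the passage from boundedness of the projection to the duality as a soft functional-analytic argument. The whole proof rests on two pointwise identities that the weight $\sigma=\sigma_{p,\eta}=(\om\,\eta^{-1/p})^{p^\p}$ is designed to satisfy, namely $\om=\sigma^{1/p^\p}\eta^{1/p}$ and $\sigma^{p-1}\eta=\om^p$, both immediate from $p^\p(p-1)=p$. The first makes the $A_\om^2$-pairing fit Hölder's inequality exactly; the second makes multiplication by $\sigma/\om$ an isometry from $L_\sigma^p$ into $L_\eta^p$.

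For (ii)$\Rightarrow$(iii): first, for $f\in A_\sigma^{p^\p}$ and $g\in A_\eta^p$, Hölder's inequality together with $\om=\sigma^{1/p^\p}\eta^{1/p}$ gives
\begin{align*}
|\langle f,g\rangle_{A_\om^2}|\le\int_\D|f||g|\om\,dA=\int_\D\big(|f|\sigma^{1/p^\p}\big)\big(|g|\eta^{1/p}\big)\,dA\le\|f\|_{A_\sigma^{p^\p}}\|g\|_{A_\eta^p},
\end{align*}
so $g\mapsto\langle\cdot,g\rangle_{A_\om^2}$ sends $A_\eta^p$ contractively into $(A_\sigma^{p^\p})^*$; evaluating against $f=B_z^\om$ and using the reproducing property shows this map is injective. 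Conversely, given $L\in(A_\sigma^{p^\p})^*$, I would extend it by Hahn--Banach to $L_\sigma^{p^\p}$ and represent it by $k\in L_\sigma^p$ with $\|k\|_{L_\sigma^p}=\|L\|$, then set $g=P_\om(k\sigma/\om)$. Self-adjointness of $P_\om$ on $L_\om^2$ and $P_\om f=f$ for analytic $f$ give $\langle f,g\rangle_{A_\om^2}=\int_\D f\bar k\,\sigma\,dA=L(f)$ for all $f\in A_\sigma^{p^\p}$, while $\sigma^{p-1}\eta=\om^p$ yields $\|k\sigma/\om\|_{L_\eta^p}=\|k\|_{L_\sigma^p}$; hence boundedness of $P_\om$ on $L_\eta^p$ forces $g\in A_\eta^p$ with $\|g\|_{A_\eta^p}\lesssim\|L\|$, establishing surjectivity and equivalence of norms.

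For (iii)$\Rightarrow$(i) I would argue by testing. The norm equivalence in (iii) reads $\|g\|_{A_\eta^p}\approx\sup\{|\langle f,g\rangle_{A_\om^2}|:\|f\|_{A_\sigma^{p^\p}}\le1\}$, and I would feed into it the explicit families of power functions $z\mapsto(1-\bar az)^{-c}$ and the reproducing kernels $B_a^\om$. Inserting the known norm asymptotics of such functions in $A_\eta^p$, $A_\sigma^{p^\p}$ and $A_\om^2$ for regular weights---all expressed through the tails $\hat\eta$, $\widehat\sigma$, $\hat\om$ evaluated at $a$---the equivalence collapses, as $|a|\to1$, to exactly the finiteness of $\sup_a\hat\eta(a)^{1/p}\widehat\sigma(a)^{1/p^\p}/\hat\om(a)=A(p,\eta)$.

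The main obstacle is the implication (i)$\Rightarrow$(ii). Here I would prove the stronger fact that the maximal projection $P_\om^+f(z)=\int_\D|B_z^\om(\xi)|f(\xi)\om(\xi)\,dA(\xi)$ is bounded on $L_\eta^p$, since this dominates boundedness of $P_\om$. Realizing $P_\om^+$ as an integral operator against $d\nu=\eta\,dA$ with kernel $|B_z^\om(\xi)|\om(\xi)/\eta(\xi)$, I would apply the weighted Schur test with a test function that is a power of $1-|z|$ adjusted by the tail weights, reducing the two Schur conditions to one-dimensional Forelli--Rudin-type integral estimates. The difficulty sits in two places: obtaining sharp two-sided estimates for the reproducing kernel $B_z^\om$ of a regular weight (the nontrivial kernel asymptotics of Pel\'aez--R\"atty\"a), and checking that the exponent in the test function can be chosen so that both Schur conditions hold at once---it is precisely here that $A(p,\eta)<\infty$ enters as the sufficient (and, upon reversing the computation by testing on the same normalized kernels, necessary) condition, closing the cycle with (ii)$\Rightarrow$(i).
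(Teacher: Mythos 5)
First, a framing remark: the paper does not prove Theorem~\ref{thB} at all --- it is imported from \cite[Theorem 13]{PjRj2021adv} and \cite[Theorem 3]{RfOpPjRj2021} --- so your proposal can only be measured against the structure of those cited arguments. Against that benchmark, your step (ii)$\Rightarrow$(iii) is correct and is the classical representation argument: the identities $\om=\sigma_{p,\eta}^{1/p^\p}\eta^{1/p}$ and $\sigma_{p,\eta}^{p-1}\eta=\om^p$ do exactly what you claim, and Hahn--Banach plus $(L_{\sigma_{p,\eta}}^{p^\p})^*\simeq L_{\sigma_{p,\eta}}^{p}$ plus $g=P_\om(k\sigma_{p,\eta}/\om)$ is the standard route (modulo the routine justifications of Fubini, of the reproducing formula $P_\om f=f$ on $A_{\sigma_{p,\eta}}^{p^\p}$, and of kernels/monomials actually lying in $A_{\sigma_{p,\eta}}^{p^\p}$). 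Your step (i)$\Rightarrow$(ii) is, as you admit, a plan rather than a proof: checking that a single Schur test function satisfies both conditions under $A(p,\eta)<\infty$, with the Pel\'aez--R\"atty\"a kernel bounds, is precisely the analytic content of the cited theorems and is left unexecuted. That is an honest hole, but the approach is the right one.

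The genuine flaw is your (iii)$\Rightarrow$(i). You propose to ``insert the known norm asymptotics for regular weights'' of power functions and point evaluations in $A_{\sigma_{p,\eta}}^{p^\p}$. But the direction of those asymptotics that your test needs (e.g.\ the Forelli--Rudin upper bound $\|F_a\|_{A_{\sigma_{p,\eta}}^{p^\p}}^{p^\p}\lesssim(1-|a|)\,\widehat{\sigma_{p,\eta}}(|a|)$) is valid only when $\sigma_{p,\eta}$ is doubling/regular, and by Lemma~\ref{lm3} of this very paper $\sigma_{p,\eta}\in\R$ is \emph{equivalent} to $A(p,\eta)<\infty$, i.e.\ to the statement (i) you are trying to reach: the step is circular. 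Moreover the circularity cannot be patched by retreating to the bounds valid for arbitrary radial $\sigma_{p,\eta}$: those are $\widehat{\sigma_{p,\eta}}(r)\gtrsim(1-r)\sigma_{p,\eta}(r)$, the subharmonicity bound $\|\delta_a\|_{(A_{\sigma_{p,\eta}}^{p^\p})^*}\lesssim\big((1-|a|)^2\sigma_{p,\eta}(a)\big)^{-1/p^\p}$, and the local kernel bound $\|B_a^\om\|_{A_\eta^p}\gtrsim\widehat{\eta}(a)^{1/p}(1-|a|)^{-1/p^\p}\widehat{\om}(a)^{-1}$; feeding these into the relation $\|B_a^\om\|_{A_\eta^p}\approx\|\delta_a\|_{(A_{\sigma_{p,\eta}}^{p^\p})^*}$ that (iii) yields, and using $(1-|a|)\sigma_{p,\eta}(a)\approx\widehat{\om}(a)^{p^\p}\widehat{\eta}(a)^{-p^\p/p}$ (which uses regularity of $\om,\eta$ only), both sides become comparable to the same quantity $\widehat{\eta}(a)^{1/p}(1-|a|)^{-1/p^\p}\widehat{\om}(a)^{-1}$, so the inequality is a tautology: the tail $\widehat{\sigma_{p,\eta}}$, which is what (i) is about, never enters. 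Indeed no test involving only pointwise boundary data can work, since $\widehat{\sigma_{p,\eta}}$ may even be identically infinite (take $\om\equiv1$, $\eta(t)=p(1-t)^{p-1}$, so $\sigma_{p,\eta}(t)\approx(1-t)^{-1}$) while all the pointwise quantities above behave as in the regular case. The natural repair is to re-route: prove (iii)$\Rightarrow$(ii) softly --- for $f\in L_\eta^p$ and $h\in A_{\sigma_{p,\eta}}^{p^\p}$, self-adjointness/Fubini give $\langle h,P_\om f\rangle_{A_\om^2}=\langle h,f\rangle_{A_\om^2}$, whence (iii) and your H\"older estimate give $\|P_\om f\|_{A_\eta^p}\lesssim\sup_{\|h\|_{A_{\sigma_{p,\eta}}^{p^\p}}\leq 1}|\langle h,f\rangle_{A_\om^2}|\leq\|f\|_{L_\eta^p}$ --- and then close with the testing argument (ii)$\Rightarrow$(i) you mention in your last sentence. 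This repair is also forced on logical grounds: as written, your implications (i)$\Rightarrow$(ii)$\Rightarrow$(iii) together with (ii)$\Rightarrow$(i) leave no valid path from (iii) back to (i) or (ii).
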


The following lemma can be proved by  a standard technique. We omit the details of the proof. \msk

\begin{Lemma}\label{lm-compact}
Suppose $\eta,\upsilon\in\hD$ and $0<p,q<\infty$. If $T:A_\eta^p\to A_\upsilon^q$ is bounded and linear, then $T$ is compact
if and only if $\lim\limits_{j\to\infty}\|Tf_j\|_{A_\upsilon^q}=0$ whenever $\{f_j\}$ is bounded in $A_\eta^p$ and converges  to 0 uniformly on  compact subsets of $\D$.
\end{Lemma}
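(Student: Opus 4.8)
The plan is to exploit the completeness of $A_\upsilon^q$ together with the standard pointwise growth estimate for functions in Bergman spaces induced by doubling weights. Since $A_\upsilon^q$ is a complete metric space (a Banach space when $q\ge1$, and a complete metric space under $d(f,g)=\|f-g\|_{A_\upsilon^q}^q$ when $0<q<1$), compactness of the bounded operator $T$ is equivalent to the assertion that every bounded sequence $\{f_j\}\subset A_\eta^p$ admits a subsequence whose image $\{Tf_{j_k}\}$ converges in the norm of $A_\upsilon^q$. The technical input I would record first is that for $\om\in\hD$ and $0<p<\infty$ there is, for each compact $K\subset\D$, a constant $C=C(K,\om,p)$ with $\sup_{z\in K}|f(z)|\le C\|f\|_{A_\om^p}$ for all $f\in A_\om^p$; since $\hat{\om}>0$ on $\D$, this follows from the subharmonicity estimate over Bergman disks together with $(\ref{0410-1})$. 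Two consequences are used repeatedly: bounded subsets of $A_\om^p$ form normal families, and norm convergence in $A_\om^q$ forces locally uniform convergence.

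For sufficiency (the sequential condition implies compactness), let $\{f_j\}$ be bounded in $A_\eta^p$. By the growth estimate it is locally uniformly bounded, so Montel's theorem yields a subsequence $\{f_{j_k}\}$ converging uniformly on compact subsets to some $f\in H(\D)$, and Fatou's lemma gives $f\in A_\eta^p$ with $\|f\|_{A_\eta^p}\le\liminf_k\|f_{j_k}\|_{A_\eta^p}$. Then $g_k:=f_{j_k}-f$ is bounded in $A_\eta^p$ and converges to $0$ uniformly on compact subsets, so the hypothesis gives $\|Tg_k\|_{A_\upsilon^q}\to0$, i.e. $Tf_{j_k}\to Tf$ in $A_\upsilon^q$. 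Thus $T$ sends the bounded sequence $\{f_j\}$ to a sequence with a norm-convergent subsequence, so $T$ is compact.

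For necessity (compactness implies the sequential condition), let $\{f_j\}$ be bounded in $A_\eta^p$ with $f_j\to0$ uniformly on compact subsets, and suppose for contradiction that $\|Tf_j\|_{A_\upsilon^q}\not\to0$. Passing to a subsequence we may assume $\|Tf_{j_k}\|_{A_\upsilon^q}\ge\varepsilon>0$, and by compactness of $T$ a further subsequence satisfies $Tf_{j_k}\to g$ in $A_\upsilon^q$, hence $Tf_{j_k}\to g$ uniformly on compact subsets. To identify the limit I would note that, for each $w\in\D$, the evaluation $h\mapsto h(w)$ is bounded on $A_\upsilon^q$ (again by the growth estimate), so $f\mapsto(Tf)(w)$ is a bounded linear functional on $A_\eta^p$. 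The growth estimate also supplies a single majorant for the bounded family $\{f_{j_k}\}$, so a bounded sequence converging to $0$ uniformly on compact subsets is annihilated by every such functional: for $1<p<\infty$ this is read through the $A_\om^2$-pairing of Theorem \ref{thB} together with dominated convergence, and the remaining range is handled the same way via the pointwise estimates. Therefore $(Tf_{j_k})(w)\to0$ for every $w$, forcing $g\equiv0$ and hence $\|Tf_{j_k}\|_{A_\upsilon^q}\to0$, a contradiction.

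The routine parts are the growth estimate and the Montel--Fatou packaging; the one genuinely delicate point is the identification $g\equiv0$ in the necessity argument, that is, verifying that locally uniform convergence of a bounded sequence passes through the bounded functionals $f\mapsto(Tf)(w)$. I expect this to be the main obstacle, and the cleanest route around it is to represent these functionals via the $A_\om^2$-pairing and invoke dominated convergence with the uniform majorant furnished by the pointwise estimate.
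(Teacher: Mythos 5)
The paper offers no proof to compare against (it declares the lemma ``standard'' and omits the details), so your proposal stands on its own. Your sufficiency direction (Montel plus Fatou, then applying the hypothesis to $f_{j_k}-f$) is correct and is indeed the routine half. The necessity direction, however, contains a genuine gap, and it sits exactly at the point you yourself flagged as delicate: the claim that a bounded sequence in $A_\eta^p$ converging to $0$ uniformly on compact subsets is annihilated by \emph{every} bounded linear functional, in particular by $f\mapsto (Tf)(w)$. Neither of your two justifications supports this. First, Theorem \ref{thB} is not available under the lemma's hypotheses: it requires $\om,\eta\in\R$ and $A(p,\eta)<\infty$, while the lemma assumes only $\eta\in\hD$ (and there is no auxiliary weight $\om$ in sight). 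Second, dominated convergence cannot be run with ``the single majorant furnished by the pointwise estimate.'' That majorant is $M(z)\approx\big((1-|z|)\widehat{\eta}(z)\big)^{-1/p}$, and it is generally not integrable against the pairing: already for $\eta\equiv 1$ one has $M(z)\approx(1-|z|)^{-2/p}$, so $\int_\D M(z)|g(z)|\,dA(z)=\infty$ even for the constant predual element $g\equiv 1$ whenever $p\le 2$. There is simply no dominating function, so this step is not a routine verification but a hole. For $1<p<\infty$ the claim is nevertheless true, but the correct route is different: $A_\eta^p$ is a closed subspace of $L_\eta^p$ (closedness follows from the point-evaluation bounds), hence reflexive; a bounded sequence therefore has weakly convergent subsequences, any weak limit must agree with the locally uniform limit $0$ because point evaluations are bounded functionals, so $f_j\rightharpoonup 0$ weakly, and applying the bounded functionals $f\mapsto(Tf)(w)$ gives $g\equiv 0$ as you wanted. (Alternatively, split the pairing integral into a compact piece, handled by uniform convergence, and a tail, handled by H\"older's inequality.)

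For $0<p\le 1$, which your last sentence waves through (``the remaining range is handled the same way''), the claim is not merely unproved but false, and in fact the lemma itself fails there for general bounded $T$. Take $\eta\equiv 1$, $p=1$. The dual of $A^1$ is the Bloch space, and pairing with the lacunary Bloch function $g(z)=\sum_{k} z^{2^k}$ gives a bounded functional $\Lambda$ on $A^1$ with $\Lambda(jz^j)=j\,\widehat{g}(j)/(j+1)$, where $\widehat{g}(j)$ is the $j$-th Taylor coefficient of $g$; thus $\Lambda(f_{2^k})\to 1$ along the sequence $f_j(z)=jz^j$, which is bounded in $A^1$ ($\|jz^j\|_{A^1}=2j/(j+2)\le 2$) and tends to $0$ uniformly on compact sets. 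The rank-one operator $Tf=\Lambda(f)\,h$, with $h\in A_\upsilon^q$ fixed and nonzero, is compact, yet $\|Tf_{2^k}\|_{A_\upsilon^q}\to\|h\|_{A_\upsilon^q}\neq 0$. So no argument can close your gap in that range: the equivalence is only correct for $1<p<\infty$ --- which is the only case the paper actually uses, in the proof of Theorem \ref{th1}$^\p$(ii) --- and there it should be proved by the weak-compactness argument above rather than by dominated convergence.
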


\begin{Lemma}\label{lm3}
Let $1<p<\infty$ and $\om,\eta\in\R$. Then,  $A(p,\eta)<\infty$ if and only if $\sigma_{p,\eta}\in\R$.
\end{Lemma}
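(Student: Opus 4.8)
The plan is to reduce the condition $A(p,\eta)<\infty$ to a statement purely about the regularity ratio of $\sigma:=\sigma_{p,\eta}$, and then to observe that one half of the regularity of $\sigma$ comes for free. Write $p^\p$ for the conjugate exponent. The starting observation is the elementary identity $\om(r)=\sigma(r)^{1/p^\p}\eta(r)^{1/p}$, which is immediate from $\sigma(r)=\big(\om(r)/\eta(r)^{1/p}\big)^{p^\p}$.

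Since $\om,\eta\in\R$ we have $\widehat{\om}(r)\approx(1-r)\om(r)$ and $\widehat{\eta}(r)\approx(1-r)\eta(r)$ uniformly in $r$. Substituting these together with the identity above into the quotient defining $A(p,\eta)$, the factors $\eta(r)^{1/p}$ cancel and a short computation (using $1/p-1=-1/p^\p$) gives, uniformly in $r\in[0,1)$,
$$\frac{\widehat{\eta}(r)^{1/p}\,\widehat{\sigma}(r)^{1/p^\p}}{\widehat{\om}(r)}\approx\left(\frac{\widehat{\sigma}(r)}{(1-r)\sigma(r)}\right)^{1/p^\p}.$$
Taking the supremum, this already shows that $A(p,\eta)<\infty$ if and only if $\sup_{0\le r<1}\frac{\widehat{\sigma}(r)}{(1-r)\sigma(r)}<\infty$, i.e.\ if and only if the \emph{upper} half of the regularity condition for $\sigma$ holds.

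It therefore remains to show that the \emph{lower} half, $\inf_{0\le r<1}\frac{\widehat{\sigma}(r)}{(1-r)\sigma(r)}>0$, always holds here, so that the upper bound by itself yields $\sigma\in\R$. First I note that $\sigma$ is integrable in both implications: if $\sigma\in\R$ this is part of the definition, while if $A(p,\eta)<\infty$ then evaluating the defining quotient at $r=0$ (where $\widehat\eta(0),\widehat\om(0)$ are finite and positive) forces $\widehat\sigma(0)=\int_0^1\sigma<\infty$. Next, because $\om,\eta\in\R$ satisfy $\om(t)\approx\om(s)$ and $\eta(t)\approx\eta(s)$ whenever $1-t\approx1-s$, and raising to fixed powers preserves such comparability, we get $\sigma(t)\approx\sigma(s)$ whenever $1-t\approx1-s$. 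Hence for $t\in[r,\tfrac{1+r}{2}]$ one has $1-t\approx1-r$, so $\sigma(t)\approx\sigma(r)$, which gives
$$\widehat{\sigma}(r)\ge\int_r^{\frac{1+r}{2}}\sigma(t)\,dt\gtrsim\sigma(r)\cdot\frac{1-r}{2}\approx(1-r)\sigma(r),$$
the desired lower bound. Combining the three displays proves both directions: $\sigma\in\R$ gives the upper bound and hence $A(p,\eta)<\infty$, while conversely $A(p,\eta)<\infty$ gives the upper bound, which together with the automatic lower bound yields $\frac{\widehat{\sigma}(r)}{(1-r)\sigma(r)}\approx1$, that is, $\sigma\in\R$.

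I expect the only genuine subtlety — the main obstacle — to be exactly this last point: $A(p,\eta)<\infty$ is visibly a one-sided (supremum) condition, whereas $\sigma\in\R$ is a two-sided comparability, so one must argue that the lower estimate is automatic. This is precisely where the "locally constant up to constants" property of regular weights, together with the integrability of $\sigma$, is used; the rest of the argument is a direct substitution and exponent bookkeeping.
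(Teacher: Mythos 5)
Your proof is correct and follows essentially the same route as the paper's: the same identity $\om=\sigma_{p,\eta}^{1/p^\p}\eta^{1/p}$ with the regularity of $\om,\eta$ handles the exponent bookkeeping, and the same estimate $\widehat{\sigma_{p,\eta}}(r)\geq\int_r^{(1+r)/2}\sigma_{p,\eta}(t)\,dt\approx(1-r)\sigma_{p,\eta}(r)$ supplies the automatic lower half of regularity. Your packaging of both directions into the single uniform equivalence of quotients, together with the explicit remark on integrability of $\sigma_{p,\eta}$, is a slightly cleaner organization of the identical argument.
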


\begin{proof}
If $\sigma_{p,\eta}\in\R$, from the fact that $$\widehat{\sigma_{p,\eta}}(r)\approx (1-r)\sigma_{p,\eta}(r),$$  we have
$A(p,\eta)<\infty$.

Conversely, assume $A(p,\eta)<\infty$.  After a calculation,
$$
\widehat{\sigma_{p,\eta}}(r)
\leq A(p,\eta)^{p^\p} \frac{\widehat{\om}(r)^{p^\p}}{\widehat{\eta}(r)^{p^\p/p}}
\approx A(p,\eta)^{p^\p} (1-r)\sigma_{p,\eta}(r) $$
and
$$
\widehat{\sigma_{p,\eta}}(r)
\geq \int_r^\frac{1+r}{2}\sigma_{p,\eta}(t)dt
\approx (1-r)\sigma_{p,\eta}(r),
$$
which implies that $\sigma_{p,\eta}\in\R$.
\end{proof}

\begin{Lemma}\label{lm2}
Let $1<p<\infty$ and $\om,\eta\in\R$ such that $A(p,\eta)<\infty$.
If $\{z_j\}_{j=1}^\infty\subset\D$ is separated,  then, for any $c=\{c_j\}_{j=1}^\infty$,
 $$\|F\|_{A_\eta^p}\lesssim \|c\|_{l^p}.$$
Here
$$F(z)=\sum_{j=1}^\infty \frac{c_j B_{z_j}^{\om}}{\|B_{z_j}^{\om}\|_{A_\eta^p}}.$$
\end{Lemma}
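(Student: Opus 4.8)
The plan is to estimate $\|F\|_{A_\eta^p}$ by duality. Since $A(p,\eta)<\infty$, Theorem \ref{thB} identifies $A_\eta^p$ with the dual of $A_{\sigma_{p,\eta}}^{p'}$ through the $A_\om^2$-pairing, so for every $g\in A_\eta^p$,
\[
\|g\|_{A_\eta^p}\approx \sup\Big\{|\langle f,g\rangle_{A_\om^2}|: f\in A_{\sigma_{p,\eta}}^{p'},\ \|f\|_{A_{\sigma_{p,\eta}}^{p'}}\le 1\Big\}.
\]
I would apply this to the partial sums $F_N=\sum_{j=1}^N c_j B_{z_j}^\om/\|B_{z_j}^\om\|_{A_\eta^p}$. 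The $A_\om^2$-pairing reproduces point evaluations on $A_{\sigma_{p,\eta}}^{p'}$, i.e. $\langle f,B_{z_j}^\om\rangle_{A_\om^2}=f(z_j)$ for all $f\in A_{\sigma_{p,\eta}}^{p'}$ (this holds for polynomials by definition of $B_{z_j}^\om$, and extends by density of polynomials together with boundedness of point evaluation), so
\[
\langle f,F_N\rangle_{A_\om^2}=\sum_{j=1}^N \frac{\ol{c_j}}{\|B_{z_j}^\om\|_{A_\eta^p}}\,f(z_j).
\]
Hölder's inequality with exponents $p$ and $p'$ then gives
\[
|\langle f,F_N\rangle_{A_\om^2}|\le \|c\|_{l^p}\Big(\sum_{j=1}^N \frac{|f(z_j)|^{p'}}{\|B_{z_j}^\om\|_{A_\eta^p}^{p'}}\Big)^{1/p'},
\]
so the whole problem reduces to the sampling-type estimate $\sum_j |f(z_j)|^{p'}\|B_{z_j}^\om\|_{A_\eta^p}^{-p'}\lesssim \|f\|_{A_{\sigma_{p,\eta}}^{p'}}^{p'}$, uniform over separated sequences.

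To handle the normalizing constants I first pin down the size of $\|B_z^\om\|_{A_\eta^p}$. By Lemma \ref{lm3} the hypothesis $A(p,\eta)<\infty$ forces $\sigma_{p,\eta}\in\R$, so (\ref{0410-1}) applies to $\sigma_{p,\eta}$. Using the duality once more on $B_z^\om\in A_\eta^p$ converts its norm into the norm of point evaluation on $A_{\sigma_{p,\eta}}^{p'}$:
\[
\|B_z^\om\|_{A_\eta^p}\approx \sup_{\|f\|_{A_{\sigma_{p,\eta}}^{p'}}\le 1}|\langle f,B_z^\om\rangle_{A_\om^2}|=\sup_{\|f\|_{A_{\sigma_{p,\eta}}^{p'}}\le 1}|f(z)|\approx \sigma_{p,\eta}(S_z)^{-1/p'},
\]
where the last comparison is the standard two-sided pointwise estimate for point evaluation in a Bergman space induced by a regular weight (the upper bound by subharmonicity and regularity of $\sigma_{p,\eta}$, the lower bound by testing against the usual functions $\xi\mapsto(1-|z|^2)^\gamma|1-\ol z\xi|^{-\gamma}$). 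With $\sigma_{p,\eta}(S_z)\approx(1-|z|)^2\sigma_{p,\eta}(z)$ this yields $\|B_z^\om\|_{A_\eta^p}^{-p'}\approx \sigma_{p,\eta}(S_z)$. The sampling estimate then follows from subharmonicity: for each $j$,
\[
|f(z_j)|^{p'}\lesssim \frac{1}{\sigma_{p,\eta}(S_{z_j})}\int_{D(z_j,r)}|f(\xi)|^{p'}\sigma_{p,\eta}(\xi)\,dA(\xi),
\]
since $\sigma_{p,\eta}$ is essentially constant on $D(z_j,r)$; multiplying by $\sigma_{p,\eta}(S_{z_j})\approx\|B_{z_j}^\om\|_{A_\eta^p}^{-p'}$ and summing, the separation of $\{z_j\}$ guarantees that the disks $D(z_j,r)$ have bounded overlap, so the sum of the integrals is $\lesssim\|f\|_{A_{\sigma_{p,\eta}}^{p'}}^{p'}$.

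Combining these steps gives $\|F_N\|_{A_\eta^p}\lesssim\|c\|_{l^p}$ with a constant independent of $N$. To pass from partial sums to $F$, I note that the kernel estimate above together with $\sum_j\sigma_{p,\eta}(S_{z_j})\lesssim\int_\D\sigma_{p,\eta}\,dA<\infty$ (disjointness of $D(z_j,s/2)$ for a separated sequence) shows the defining series converges absolutely at each fixed $z$, so $F_N\to F$ pointwise; Fatou's lemma then gives $\|F\|_{A_\eta^p}\le\liminf_N\|F_N\|_{A_\eta^p}\lesssim\|c\|_{l^p}$. The main obstacle is the sampling inequality together with the sharp two-sided estimate for $\|B_z^\om\|_{A_\eta^p}$ on which it rests; once these are in hand, everything else is a formal consequence of the duality of Theorem \ref{thB}, Lemma \ref{lm3}, and the geometry of separated sequences. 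A secondary point that must be checked carefully is that the reproducing formula $\langle f,B_z^\om\rangle_{A_\om^2}=f(z)$ genuinely extends from $A_\om^2$ to all of $A_{\sigma_{p,\eta}}^{p'}$.
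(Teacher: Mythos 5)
Your proposal is correct and follows essentially the same route as the paper's proof: duality from Theorem \ref{thB}, the two-sided estimate $\|B_z^\om\|_{A_\eta^p}\approx\sigma_{p,\eta}(S_z)^{-1/p^\p}$ obtained from Lemma \ref{lm3}, test functions, and subharmonicity, followed by H\"older's inequality and the bounded overlap of Bergman disks for a separated sequence. The only differences are presentational refinements on your part (working with partial sums and passing to the limit via Fatou, and explicitly flagging that the reproducing identity $\langle f,B_z^\om\rangle_{A_\om^2}=f(z)$ must be extended to $A_{\sigma_{p,\eta}}^{p^\p}$), points the paper handles implicitly.
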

\begin{proof}
Since $A(p,\eta)<\infty$,
by Theorem \ref{thB},   $(A_{\sigma_{p,\eta}}^{p^\p})^*\simeq A_\eta^p$  via $A_\om^2$-pairing with equivalent norms.
By Lemma \ref{lm3}, $\sigma_{p,\eta}\in\R$.
Then Lemma 2.4 in \cite{PjaRj2014book} implies that, if $\gamma$ is large enough,
$$F_a(z)=\left(\frac{1-|a|^2}{1-\overline{a}z}\right)^\gamma\in A_{\sigma_{p,\eta}}^{p^\p}~~~\, \mbox{ and}~~\,  \|F_a\|_{A_{\sigma_{p,\eta}}^{p^\p}}^{p^\p}\approx \sigma_{p,\eta}(S_a).$$
Then,
$$
\|B_z^\om\|_{A_\eta^p}
\approx\sup_{\|h\|_{A_{\sigma_{p,\eta}}^{p^\p}}\leq 1}|\langle h, B_z^\om\rangle_{A_\om^2}|
\gtrsim \frac{1}{\sigma_{p,\eta}(S_z)^{1/{p^\p}}} |\langle F_z, B_z^\om\rangle_{A_\om^2}|
=\frac{1}{\sigma_{p,\eta}(S_z)^{1/{p^\p}}}
$$
and
\begin{align*}
\|B_z^\om\|_{A_\eta^p}
\approx\sup_{\|h\|_{A_{\sigma_{p,\eta}}^{p^\p}}\leq 1}|\langle h, B_z^\om\rangle_{A_\om^2}|
=\sup_{\|h\|_{A_{\sigma_{p,\eta}}^{p^\p}}\leq 1} |h(z)|\lesssim \frac{1}{\sigma_{p,\eta}(S_z)^{1/{p^\p}}}.
 \end{align*}
In the last estimate, we used the fact that   $\sigma_{p,\eta}\in\R$, (\ref{0410-1}) and the  subharmonicity of $|h|^{p^\p}$, i.e., for any given $0<r<\infty$,
\begin{align*}
|h(z)|^{p^\p}
&\lesssim \frac{1}{\sigma_{p,\eta}(D(z,r))}\int_{D(z,r)} |h(\xi)|^{p^\p}\sigma_{p,\eta}(\xi)dA(\xi)\\
&\lesssim \frac{1}{\sigma_{p,\eta}(S_z)}\|h\|_{A_{\sigma_{p,\eta}}^{p^\p}}^{p^{\p}},~~~~z\in\D.
\end{align*}
Thus, by the fact that $\sigma_{p,\eta}\in\R$  we have
\begin{align}\label{th1-2}
\|B_z^\om\|_{A_\eta^p}
\approx \frac{1}{\sigma_{p,\eta}(S_z)^{1/{p^\p}}}
\approx \frac{\eta(S_z)^{1/p}}{\om(S_z)}.
\end{align}
So, if $\{z_j\}$ is $2r$-separated, for any $g\in A_{\sigma_{p,\eta}}^{p^\p}$, by H\"older's inequality, (\ref{0410-1})  and (\ref{th1-2}), we have
\begin{align*}
\left| \langle g,F\rangle_{A_{\om}^2}\right|
&=\left|
\sum_{j=1}^\infty \frac{c_j \langle g,B_{z_j}^{\om}\rangle_{A_{\om}^2}}{\|B_{z_j}^{\om}\|_{A_\eta^p}}\right|
=\left| \sum_{j=1}^\infty \frac{c_j g(z_j)}{\|B_{z_j}^{\om}\|_{A_\eta^p}}\right|   \\
&\leq \|c\|_{l^p}  \left( \sum_{j=1}^\infty \frac{ |g(z_j)|^{p^\p}}{\|B_{z_j}^{\om}\|_{A_\eta^p}^{p^\p}}\right)^\frac{1}{p^\p}
\approx \|c\|_{l^p}  \left( \sum_{j=1}^\infty { {\sigma_{p,\eta}(D(z_j,r))} \cdot  |g(z_j)|^{p^\p}}\right)^\frac{1}{p^\p} \\
&\lesssim  \|c\|_{l^p}  \left( \sum_{j=1}^\infty \int_{D(z_j,r)} |g(z)|^{p^\p}\sigma_{p,\eta}(z)dA(z)\right)^\frac{1}{p^\p} \\
&\lesssim \|c\|_{l^p}  \|g\|_{ A_{\sigma_{p,\eta}}^{p^\p}}.
\end{align*}
Therefore,
$\|F\|_{A_{\eta}^p}\lesssim \|c\|_{l^p}.$
The proof is complete.
\end{proof}

\noindent{\bf Remark.} It  should be point out that, when $\eta,\om\in\hD$ and $p>0$,   $\|B_z^\om\|_{A_\eta^p}$ was  estimated  for the  first time in \cite[Theorem 1]{PjRj2016jmpa}.
Here, for the benefit of readers, under some more assumptions,  we estimate it in a simple way.\msk

To prove our main results,  we will use the classical Khinchin's inequality and Kahane's inequality, which can be found in  \cite[Chapters 1 and 11]{DjThTa} for example.
For $k\in\N$ and $t\in(0,1)$, let $r_k(t)=$sign$(\sin (2^k\pi t))$ be  a sequence of Rademacher functions.  \msk

\noindent{\bf Khinchin's inequality: } Let $0<p<\infty$. Then for any sequence $\{c_k\}$ of complex numbers,
$$\left(\sum_{k=1}^\infty |c_k|^2\right)^\frac{1}{2}\approx \left(\int_0^1 \left|\sum_{k=1}^\infty c_kr_k(t) \right|^pdt\right)^\frac{1}{p}.
$$

\noindent{\bf Kahane's inequality: } Let $X$ be a quasi-Banach space, and $0<p,q<\infty$. For any sequence $\{x_k\}\subset X$,
$$\left(\int_0^1 \left\|\sum_{k=1}^\infty r_k(t)x_k\right\|_X^p dt\right)^\frac{1}{p}
\approx \left(\int_0^1 \left\|\sum_{k=1}^\infty r_k(t)x_k\right\|_X^q dt\right)^\frac{1}{q}.$$
Moreover, the implicit constants   can be chosen to depend only on $p$ and $q$, and  independent of the  space $X$.\msk

\section{Proof of main results}

By Theorem \ref{thA}, Theorem \ref{th1} can be stated in a more direct way as follows. Hence, to prove Theorem \ref{th1} we only need to prove the following theorem.

\begin{th1p}
Suppose $1<p,q <\infty$, $\mu$ is a positive Borel measure on $\D$ and $\om,\eta,\upsilon\in\R$ such that
\begin{align}\label{th1-c1}
A(p,\eta)<\infty,\,\,\,\,\, A(q,\upsilon)<\infty.
\end{align}
Then, the following statements hold.
\begin{enumerate}[(i)]
  \item When $1<p\leq q<\infty$,  $\T_\mu^\om:A_\eta^p\to A_\upsilon^q$ is bounded if and only if  for some (equivalently, for all) $r\in (0,\infty)$,
 \begin{align}\label{th1-r1}
M_0=\sup_{z\in\D}\frac{\mu(D(z,r))}{\om((D(z,r)))}   \frac{\upsilon(D(z,r))^\frac{1}{q}}{\eta(D(z,r))^\frac{1}{p}}<\infty.
\end{align}
Moreover, $\|\T_\mu^\om\|_{A_\eta^p\to A_\upsilon^q}\approx M_0.$
  \item When $1<p\leq q<\infty$,
  $\T_\mu^\om:A_\eta^p\to A_\upsilon^q$ is compact if and only if for some (equivalently, for all) $r\in (0,\infty)$,
 \begin{align}\label{th1-r2}
\lim_{|z|\to 1}\frac{\mu(D(z,r))}{\om((D(z,r)))}   \frac{\upsilon(D(z,r))^\frac{1}{q}}{\eta(D(z,r))^\frac{1}{p}}=0.
\end{align}
\item When $1<q< p<\infty$, the following statements are equivalent:
\begin{enumerate}
  \item[(iiia)] $\T_\mu^{\om}:A_{\eta}^p\to A_{\upsilon}^q$ is compact;
  \item[(iiib)] $\T_\mu^{\om}:A_{\eta}^p\to A_{\upsilon}^q$ is bounded;
  \item[(iiic)] for some (equivalently, for all)  separated and $r$-covering($0<r<\infty$) sequence $\{z_j\}_{j=1}^\infty$,
      $$\lambda_j=\frac{\mu(D(z_j,r))}{\om(D(z_j,r))}
         \frac{\upsilon(D(z_j,r))^\frac{1}{q}}{\eta(D(z_j,r))^\frac{1}{p} },\,\,\, j=1,2,\cdots, $$
         is a sequence in $ l^\frac{pq}{p-q}$;
  \item[(iiid)] for some (equivalently, for all) $r\in (0,\infty)$, $\widehat{\mu}_r\in L_\om^\frac{pq}{p-q}$, where
  \begin{align*}
  \widehat{\mu}_r(z)=\frac{\mu(D(z,r))}{\om(D(z,r))^{1+\frac{1}{q}-\frac{1}{p}}}
         \frac{\upsilon(D(z,r))^\frac{1}{q}}{\eta(D(z,r))^\frac{1}{p} }.
  \end{align*}
\end{enumerate}
Moreover, $$\|\T_\mu^\om\|_{A_\eta^p\to A_\upsilon^q} \approx \|\{\lambda_j\}\|_{l^\frac{pq}{p-q}}\approx \|\widehat{\mu}_r\|_{L_\om^\frac{pq}{p-q}}.$$
\end{enumerate}
\end{th1p}

\begin{proof}
For convenience, write $\|\T_\mu^\om\|=\|\T_\mu^\om\|_{A_\eta^p\to A_{\upsilon}^q}$.
Since (\ref{th1-c1}) holds, by  Lemma \ref{lm3} and Theorem \ref{thB}, we have that
$\sigma_{p,\eta}\in\R$, $\sigma_{q,\upsilon}\in\R$ and
\begin{align}\label{th1-1}
(A_{\sigma_{p,\eta}}^{p^\p})^*\simeq A_\eta^p,\,\,\,\,\,\,\,(A_{\sigma_{q,\upsilon}}^{q^\p})^*\simeq A_\upsilon^{q}
\end{align}
 via $A_\om^2$-pairing with equivalent   norms.

$(i)$. Suppose $1<p\leq q<\infty$ and $\T_\mu^{\om}:A_{\eta}^p\to A_{\upsilon}^q$ is bounded.
By (\ref{th1-2}), we have
\begin{align*}
|\T_\mu^{\om} B_z^{\om}(z)|
=\left|\langle \T_\mu^{\om} B_z^{\om}, B_z^{\upsilon}   \rangle_{A_{\upsilon}^2}\right|
&\leq \|\T_\mu^\om B_z^\om\|_{A_\upsilon^q}\|B_z^\upsilon\|_{A_\upsilon^{q^\p}}   \\
&\leq \|\T_\mu^{\om}\|  \cdot \| B_z^{\om}\|_{A_\eta^p}  \|B_z^\upsilon\|_{A_\upsilon^{q^\p}}\\
&\approx \|\T_\mu^{\om}\|  \frac{\eta(S_z)^\frac{1}{p}}{\om(S_z)}  \frac{1}{\upsilon(S_z)^\frac{1}{q}}.
\end{align*}
Meanwhile, by \cite[Lemma 8]{PjaRjSk2018jga}, there exists a real number $r>0$ such that
\begin{align}\label{0315-2}
|B_z^{\om}(\xi)|\approx B_z^\om(z) \,\,\mbox{ for all }\,\,\xi\in D(z,r)\,\,\mbox{ and } z\in \D.
\end{align}
Then, (\ref{th1-2}) implies
$$|B_z^\om(z)|=\langle B_z^\om, B_z^\om\rangle_{A_\om^2}=\|B_z^\om\|_{A_\om^2}^2\approx \frac{1}{\om(S_z)}.$$
Thus,
\begin{align}
(\T_\mu^{\om} B_z^{\om})(z)
&=\int_\D |B_z^{\om}(\xi)|^2 d\mu(\xi)
\gtrsim  \frac{\mu(D(z,r))}{\om(S_z)^2}.  \label{0410-3}
\end{align}
Therefore, using (\ref{0410-1}),
$$
M_0\approx \sup_{z\in\D}\frac{\mu(D(z,r))}{\om(S_z)}   \frac{\upsilon(S_z)^\frac{1}{q}}{\eta(S_z)^\frac{1}{p}}\lesssim \|\T_\mu^\om\|.$$

Conversely, suppose  (\ref{th1-r1}) holds. Let
\begin{align}\label{0317-1}
W(z)=\eta(z)^\frac{q^\p}{p+q^\p}\sigma_{q,\upsilon}(z)^\frac{p}{p+q^\p}.
\end{align}
By H\"older's inequality and $\sigma_{q,\upsilon}\in\R$,
\begin{align*}
\widehat{W}(t)
\leq \widehat{\eta}(t)^\frac{q^\p}{p+q^\p}\widehat{\sigma_{q,\upsilon}}(t)^\frac{p}{p+q^\p}
\approx (1-t)W(t),
\end{align*}
and
$$\widehat{W}(t)\geq \int_t^\frac{t+1}{1}W(r)dr\approx (1-t)W(t).$$
Therefore, $W\in\R$ and
$$W(S_z)^\frac{p+q^\p}{pq^\p}\approx \frac{\om(S_z)\eta(S_z)^\frac{1}{p}}{\upsilon(S_z)^\frac{1}{q}}.
$$
So, for any $f\in A_\eta^p$ and $h\in A_{\sigma_{q,\upsilon}}^{q^\p}$,
by  (\ref{th1-r1}), (\ref{0410-1}), Theorem \ref{thA} and H\"older's inequality,
\begin{align*}
\int_{\D} |h(z)f(z)|d\mu(z)
&\lesssim M_0\left( \int_\D |f(z)h(z)|^\frac{pq^\p}{p+q^\p} W(z)dA(z) \right)^\frac{p+q^\p}{pq^\p}
\leq M_0\|f\|_{A_\eta^p}\|h\|_{A_{\sigma_{q,\upsilon}}^{q^\p}}.
\end{align*}
By Fubini's theorem and $(A_{\sigma_{q,\upsilon}}^{q^\p})^*\simeq A_\upsilon^{q}$ via $A_\om^2$-pairing, we have
\begin{align*}
\|\T_\mu^{\om} f\|_{A_\upsilon^q}
\approx\sup_{h\in A_{\sigma_{q,\upsilon}}^{q^\p}}\frac{\left|\langle h,\T_\mu^{\om} f\rangle_{A_{\om}^2} \right|}{\|h\|_{A_{\sigma_{q,\upsilon}}^{q^\p}}}
=\sup_{h\in A_{\sigma_{q,\upsilon}^{q^\p}}} \frac{\left|\int_{\D} h(z)\overline{f(z)}d\mu(z) \right|}{\|h\|_{A_{\sigma_{q,\upsilon}}^{q^\p}}}
\lesssim M_0\|f\|_{A_\eta^p},
\end{align*}
i.e.,
$\|\T_\mu\|\lesssim M_0.$

$(ii)$. Suppose  $1<p\leq q<\infty$  and $\T_\mu^\om:A_\eta^p\to A_\upsilon^q$ is compact. Let
$$b_z(w)=\frac{B_z^\om(w)}{\|B_z^\om\|_{A_\eta^p}}.$$
By (\ref{th1-2}) and $\sigma_{p,\eta}\in\R$, $\{b_z\}$ is bounded in $A_\eta^p$ and converges to 0 uniformly on compact subsets of $\D$ as $|z|\to 1$.
By Lemma \ref{lm-compact}, $\lim\limits_{|z|\to 1}\|\T_\mu^\om b_z\|_{A_\upsilon^q}=0$.
Then,  ({\ref{th1-r2}}) is obtained from (\ref{0410-1}), (\ref{th1-2}),
\begin{align*}
|\T_\mu^{\om} b_z(z)|
=\left|\langle \T_\mu^{\om} b_z, B_z^{\upsilon}   \rangle_{A_{\upsilon}^2}\right|
\leq \|\T_\mu^{\om}b_z\|_{A_\upsilon^q}   \|B_z^\upsilon\|_{A_\upsilon^{q^\p}}
\approx \|\T_\mu^{\om}b_z\|_{A_\upsilon^q}   \frac{1}{\upsilon(S_z)^\frac{1}{q}}
\end{align*}
and
\begin{align*}
| \T_\mu^{\om} b_z (z)|
&=\frac{1}{\|B_z\|_{A_\eta^p}}\int_\D |B_z^{\om}(\xi)|^2 d\mu(\xi)
\gtrsim  \frac{\om(D(z,r))}{\eta(D(z,r))^\frac{1}{p}}\frac{\mu(D(z,r))}{\om(D(z,r))^2}.
\end{align*}
Here, $r$ is that in (\ref{0315-2}).

Conversely, suppose (\ref{th1-r2}) holds.
For any $s\in (0,1)$, let $$d\mu_s=\chi_s(z)d\mu(z),~~~~~~~~~~~~ d\mu_{s,-}=d\mu-d\mu_s$$ and
$$M_s=\sup_{z\in\D}\frac{\mu_s(D(z,r))}{\om((D(z,r)))}   \frac{\upsilon(D(z,r))^\frac{1}{q}}{\eta(D(z,r))^\frac{1}{p}}.$$
Here, $\chi_s(z)$ is the  characteristic function of $\{z\in\D:|z|\geq s\}$.
Then, $\lim\limits_{s\to 1} M_s=0.$
Let $\{f_n\}$ be bounded in $A_\eta^p$ and converge to 0 uniformly on compact subsets of $\D$ as $n\rightarrow\infty$.
 Then, by statement $(i)$, we have
\begin{align*}
\lim_{n\to\infty}\|\T_\mu^\om f_n\|_{A_\upsilon^q}
&\leq \lim_{n\to\infty}\|\T_{\mu_{s,-}}^\om f_n\|_{A_\eta^q}+\lim_{n\to\infty}\|\T_{\mu_s}^\om f_n\|_{A_\eta^q}\\
&\lesssim C(s)\lim_{n\to\infty} \sup_{|z|<s}|f_n(z)|+M_s\|f_n\|_{A_\eta^p}
=M_s\|f_n\|_{A_\eta^p}.
\end{align*}
Letting $s\to 1$, $\lim\limits_{n\to\infty}\|\T_\mu^\om f_n\|_{A_\upsilon^q}=0$. By Lemma \ref{lm-compact}, $\T_\mu^\om:A_\eta^p\to A_\upsilon^q$ is compact.

$(iii)$.   $(iiia) \Rightarrow (iiib)$. It  is obvious.

$(iiib) $$\Rightarrow$ $(iiic)$.
Suppose $\T_\mu^{\om}:A_\eta^p\to A_\upsilon^q$ is bounded.
Let $\{z_j\}_{j=1}^\infty\subset\D$ be separated and $r$-covering.
Firstly, we choose $r\in(0,\infty)$ as that in (\ref{0315-2}).
For any $c=\{c_j\}_{j=1}^\infty\in l^p$ and $r_j(t)=$sign$(\sin (2^j\pi t))$, let
$$F_t(z)=\sum_{j=1}^\infty \frac{c_j r_j(t) B_{z_j}^{\om}}{\|B_{z_j}^{\om}\|_{A_\eta^p}}.$$
Then, Lemma \ref{lm2} implies
$$\|\T_\mu^{\om} F_t\|_{A_{\upsilon}^q}^q\lesssim \|\T_\mu^{\om}\|^q  \|c\|_{l^p}^q.$$
By Khinchin's inequality, we have
\begin{align*}
\int_0^1 \|\T_\mu^{\om} F_t\|_{A_{\upsilon}^q}^q dt
&=\int_\D \left(\int_0^1 \left| \sum_{j=1}^\infty \frac{c_j r_j(t) \T_\mu^{\om}B_{z_j}^{\om}(z) }{\|B_{z_j}^{\om}\|_{A_{\eta}^p}}\right|^q dt\right)
\upsilon(z)dA(z) \\
&\approx \int_\D \left(
\sum_{j=1}^\infty \frac{|c_j|^2|\T_\mu^{\om}B_{z_j}^{\om}(z) |^2}{\|B_{z_j}^{\om}\|_{A_{\eta}^p}^2}
\right)^\frac{q}{2} \upsilon(z)dA(z) \\
&\gtrsim \sum_{j=1}^\infty \frac{|c_j|^q }{\|B_{z_j}^{\om}\|_{A_{\eta}^p}^q}
\int_{D(z_j,r)}|\T_\mu^{\om}B_{z_j}^{\om}(z) |^q\upsilon(z)dA(z).
\end{align*}
Then, by (\ref{0410-1}),  (\ref{th1-2}), (\ref{0410-3}) and subharmonicity of $|\T_\mu^{\om}B_{z_j}^{\om}|^q$, we have
\begin{align*}
\int_0^1 \|\T_\mu^{\om} F_t\|_{A_{\upsilon}^q}^q dt
&\gtrsim \sum_{j=1}^\infty |c_j|^q
\left( \frac{\om(D(z_j,r))\upsilon(D(z_j,r))^\frac{1}{q}}{\eta(D(z_j,r))^\frac{1}{p} }
|\T_\mu^{\om}B_{z_j}^{\om}(z_j)| \right)^q\\
&\gtrsim \sum_{j=1}^\infty |c_j|^q \left(
\frac{\mu(D(z_j,r))}{\om(D(z_j,r))}
\frac{\upsilon(D(z_j,r))^\frac{1}{q}}{\eta(D(z_j,r))^\frac{1}{p} }
\right)^q.
\end{align*}
So, for all $c=\{c_j\}_{j=1}^\infty\in l^p$, we have
$$\sum_{j=1}^\infty |c_j \lambda_j|^q\lesssim \|\T_\mu^\om\|^q \, \|c\|_{l^p}^q.$$
The classical duality relation $(l^\frac{p}{q})^*\simeq l^\frac{p}{p-q}$ implies that $\{\lambda_j\}\in l^\frac{pq}{p-q}$
and $$\|\{\lambda_j\}\|_{l^\frac{pq}{p-q}}\lesssim\|\T_\mu\|.$$

Suppose $r_0>r$ and   $\{z_j^\p\}_{j=1}^\infty$ is separated and $r_0$-covering.
Let $\chi(j,i)=1$ when $D(z_j^\p,r_0)\cap D(z_i,r)\neq \emptyset$ and $\chi(i,j)=0$ otherwise.
It is easy to check that there exists a natural number $K$ such that
$$1\leq \sum_{j=1}^\infty \chi(j,i)\leq K,\,\,\,\,1\leq \sum_{i=1}^\infty\chi(j,i)\leq K.$$
Then
\begin{align*}
\sum_{j=1}^\infty \left|\frac{\mu(D(z_j^\p,r_0))}{\om(D(z_j^\p,r_0))}
         \frac{\upsilon(D(z_j^\p,r_0))^\frac{1}{q}}{\eta(D(z_j^\p,r_0))^\frac{1}{p} }\right|^\frac{pq}{p-q}
&\lesssim  \sum_{j=1}^\infty \left|  \sum_{i=1}^\infty \frac{\mu(D(z_i,r))}{\om(D(z_i,r))}
         \frac{\upsilon(D(z_i,r))^\frac{1}{q}}{\eta(D(z_,r))^\frac{1}{p} } \chi(j,i) \right|^\frac{pq}{p-q} \\
&\approx \sum_{i=1}^\infty \sum_{j=1}^\infty \left|  \frac{\mu(D(z_i,r))}{\om(D(z_i,r))}
         \frac{\upsilon(D(z_i,r))^\frac{1}{q}}{\eta(D(z_,r))^\frac{1}{p} } \chi(j,i) \right|^\frac{pq}{p-q} \\
         &\approx \sum_{i=1}^\infty  \left|  \frac{\mu(D(z_i,r))}{\om(D(z_i,r))}
         \frac{\upsilon(D(z_i,r))^\frac{1}{q}}{\eta(D(z_,r))^\frac{1}{p} }  \right|^\frac{pq}{p-q}. \\
\end{align*}
Therefore, $(iiic)$ holds and $\|\{\lambda_j\}\|_{l^\frac{pq}{p-q}}\lesssim\|\T_\mu^\om\|$ for all $0<r<\infty$.

$(iiic) \Rightarrow (iiid)$. Suppose $(iiic)$ holds.
Let $\{z_j\}_{j=1}^\infty$ be separated and $s$-covering.
For convenience, let
$$W_1(z)=\frac{\om(D(z,r))^{1+\frac{1}{q}-\frac{1}{p}}\eta(D(z,r))^\frac{1}{p}}{\upsilon(D(z,r))^\frac{1}{q}}.$$
Then
\begin{align*}
\|\widehat{\mu}_r\|_{L_\om^\frac{pq}{p-q}}^\frac{pq}{p-q}
&\lesssim \sum_{j=1}^\infty \int_{D(z_j,s)} \left(\frac{\mu(D(z,r))}{W_1(z)}\right)^\frac{pq}{p-q}\om(z)dA(z)   \\
&\lesssim \sum_{j=1}^\infty \left(\frac{\mu(D(z_j,r+s))}{W_1(z_j)}\right)^\frac{pq}{p-q}\om(D(z_j,r+s))\\
&\approx \sum_{j=1}^\infty \left( \frac{\mu(D(z_j,r+s))}{\om(D(z_j,r+s))}
\frac{\upsilon(D(z_j,r+s))^\frac{1}{q}}{\eta(D(z_j,r+s))^\frac{1}{p} } \right)^\frac{pq}{p-q} .
\end{align*}
Therefore, $\widehat{\mu}_r\in L_\om^\frac{pq}{p-q}$
and $$\|\widehat{\mu}_r\|_{L_\om^\frac{pq}{p-q}}\lesssim \|\{\lambda_j\}\|_{l^\frac{pq}{p-q}}.$$

$(iiid)\Rightarrow(iiib)$. Suppose $(iiic)$ holds.
Let $W$ be defined as that in (\ref{0317-1}).
Then we have
$$\int_\D\left(\frac{\mu(D(z,r))}{W(D(z,r))}\right)^\frac{pq}{p-q}W(z)dA(z)
\approx\|\widehat{\mu}_r\|_{L_\om^\frac{pq}{p-q}}^\frac{pq}{p-q}<\infty. $$
So, by Theorem \ref{thA}, $Id:A_{W}^{\frac{pq}{pq+q-p}}\to L_\mu^1$ is bounded and
$$\|I_d\|_{A_{W}^\frac{pq}{pq+q-p}\to L_\mu^1}\approx \|\widehat{\mu}_r\|_{L_\om^\frac{pq}{p-q}}.$$
Therefore, for any $f\in{A_{\eta}^p}$ and $h\in {A_{\sigma_{q,\upsilon}}^{q^\p}}$,
 by Fubini's theorem and H\"older's inequality,  we have
\begin{align*}
|\langle h,\T_\mu^\om f\rangle_{A_\om^2}|
&\leq \int_\D |h(z)f(z)|d\mu(z)\\
&\leq \|\widehat{\mu}_r\|_{L_\om^\frac{pq}{p-q}}\left(\int_\D |f(z)h(z)|^\frac{pq}{pq+q-p}W(z)dA(z)\right)^\frac{pq+q-p}{pq}\\
&\lesssim \|\widehat{\mu}_r\|_{L_\om^\frac{pq}{p-q}}\|f\|_{A_{\eta}^p} \|h\|_{A_{\sigma_{q,\upsilon}}^{q^\p}}.
\end{align*}
Then, (\ref{th1-1}) implies  $\|\T_\mu^\om\|\lesssim \|\widehat{\mu}_r\|_{L_\om^\frac{pq}{p-q}}.$

$(iiib)\Rightarrow(iiia)$. By \cite[Theorem 3.2]{ZxXlFhLj2014amsc}, $A_\eta^p$ and $A_\upsilon^q$ are isomorphic to $l^p$ and $l^q$, respectively.  Theorem I.2.7 in \cite{LjTl} shows that every bounded operator from $l^p$ to $l^q$ is compact when $0<q<p<\infty$.
Therefore, $(iiib)\Rightarrow(iiia)$.
The proof is complete.
\end{proof}

\begin{proof}[Proof of Theorem 2]
Suppose $\mu$ is a $\lambda$-Carleson measure for $A_\om^1$  and $n\geq 2$. 
Let $h_i\in A_{\om_i}^{p_i/{q_i}}(i=1,2,\cdots,n)$. By H\"older's inequality,
\begin{align*}
\left\|\prod_{i=1}^n h_i\right\|_{A_\om^{1/\lambda}}
=\left(\int_\D \prod_{i=1}^n \left(|h_i(z)|^{\frac{1}{\lambda}}\om_i(z)^\frac{q_i}{\lambda p_i}\right) dA(z)\right)^\lambda
\leq \prod_{i=1}^n\|h_i\|_{A_{\om_i}^{p_i/q_i}}.
\end{align*}
Then, letting $C_{\mu,\om,\lambda}=\|I_d\|_{A_\om^{1/\lambda}\to L_\mu^1}$, we have
\begin{align}\label{th2-1}
\left\|\prod_{i=1}^n h_i\right\|_{L_\mu^1}
\leq  C_{\mu,\om,\lambda} \left\|\prod_{i=1}^n h_i\right\|_{A_\om^{1/\lambda}}
\leq C_{\mu,\om,\lambda}\prod_{i=1}^n\|h_i\|_{A_{\om_i}^{p_i/q_i}}.
\end{align}
Let
$$d\mu_1(z)=\prod_{i=2}^n|h_i(z)|d\mu(z).$$
Then,  Theorem \ref{thA} and (\ref{th2-1}) imply
$$\|I_d\|_{A_{\om_1}^{p_1}\to L_{\mu_1}^{q_1}}^{q_1}
\approx \|I_d\|_{A_{\om_1}^{p_1/q_1}\to L_{\mu_1}^1}
\lesssim  C_{\mu,\om,\lambda}\prod_{i=2}^n\|h_i\|_{A_{\om_i}^{p_i/q_i}}.$$
Therefore, for all $f_1\in A_\om^{p_1}$,
\begin{align}\label{th2-2}
\int_\D |f_1(z)|^{q_1}d\mu_1(z)
\leq \|I_d\|_{A_{\om_1}^{p_1}\to L_{\mu_1}^{q_1}}^{q_1}\|f_1\|_{A_{\om_1}^{p_1}}^{q_1}
\lesssim  C_{\mu,\om,\lambda}\prod_{i=2}^n\|h_i\|_{A_{\om_i}^{p_i/q_i}}  \|f_1\|_{A_{\om_1}^{p_1}}^{q_1}.
\end{align}
Similarly, letting
$$d\mu_2(z)=|f_1(z)|^{q_1}|h_3(z)h_4(z)\cdots h_n(z)|d\mu(z),$$
by Theorem \ref{thA} and (\ref{th2-2}), we have
$$\|I_d\|_{A_{\om_2}^{p_2}\to L_{\mu_2}^{q_2}}^{q_2}
\approx \|I_d\|_{A_{\om_2}^{p_2/q_2}\to L_{\mu_2}^1}
\lesssim C_{\mu,\om,\lambda}\|f_1\|_{A_{\om_1}^{p_1}}^{q_1}\prod_{i=3}^n\|h_i\|_{A_{\om_i}^{p_i/q_i}},$$
and, for all $f_2\in A_{\om_2}^{p_2}$,
\begin{align*}
\int_\D |f_2(z)|^{q_2}d\mu_2(z)
\leq& \|I_d\|_{A_{\om_2}^{p_2}\to L_{\mu_2}^{q_2}}^{q_2}\|f_2\|_{A_{\om_2}^{p_2}}^{q_2}\\
\lesssim & C_{\mu,\om,\lambda}\|f_1\|_{A_{\om_1}^{p_1}}^{q_1}  \|f_2\|_{A_{\om_2}^{p_2}}^{q_2} \prod_{i=3}^n\|h_i\|_{A_{\om_i}^{p_i/q_i}}.
\end{align*}
Continuing this process, we   get (\ref{th2-r})
and $M_n\lesssim \|I_d\|_{A_\om^{1/\lambda}\to L_\mu^1}.$

Conversely, suppose (\ref{th2-r}) holds. When $\lambda\geq 1$, by Lemma 2.4 in \cite{PjaRj2014book}, we can choose $\gamma$ large enough such that
$$\|F_a\|_{A_{\om_i}^{p_i}}^{p_i}\approx \om_i(S_a),\,\,\,\mbox{ for all }\,\,\,i=1,2,\cdots,n \,\,\,\mbox{ and }\,\,\,a\in\D,$$
where $F_a(z)=\left(\frac{1-|a|^2}{1-\overline{a}z}\right)^\gamma$.
For any $z\in S_a$ and $a\in\D$, we have $|1-\overline{a}z|\approx 1-|a|$.
So, (\ref{th2-r}) implies
\begin{align*}
\mu(S_a)\lesssim
\int_\D \left|\frac{1-|a|^2}{1-\overline{a}z}\right|^{\gamma(q_1+q_2+\cdots+q_n)}d\mu(z)
\lesssim  M_n \om(S_a)^\lambda.
\end{align*}
By Theorem \ref{thA}, $\mu$ is a $\lambda$-Carelson measure for $A_\om^1$ and
$M_n \gtrsim \|I_d\|_{A_\om^{1/\lambda}\to L_\mu^1}.$

So, we only need to prove the case of $0<\lambda<1$. We use induction on $n$. When $n=1$, it is just the definition of Carleson measure for Bergman spaces.   Now, let $n\geq 2$ and the result holds for $n-1$ functions. Set
$$\lambda_{n-1}=\sum_{i=1}^{n-1}\frac{q_i}{p_i},
\,\,\,\,\,\eta(z)=\prod\limits_{i=1}^{n-1} \om_i(z)^\frac{q_i}{\lambda_{n-1} p_i},
\,\,\,\,\,d\mu_{f_n,q_n}(z)=|f_n(z)|^{q_n}d\mu(z),$$
and
$$
M_{n-1,f_n,q_n}=\sup_{f_i\in A_{\om_i}^{p_i},i=1,2,\cdots,n-1}
\frac {\int_\D \prod_{i=1}^{n-1}|f_i(z)|^{q_i}d\mu_{f_n,q_n}(z)} {\prod_{i=1}^{n-1}\|f_i\|_{A_{\om_i}^{p_i}}^{q_i}}.
$$
Then, (\ref{th2-r}) and the assumption imply
$$ \|I_d\|_{A_\eta^{{1/{\lambda_{n-1}}}}\to L_{\mu_{f_n,q_n}}^1} \approx M_{n-1,f_n,q_n} \lesssim M_n \|f_n\|_{A_{\om_n}^{p_n}}^{q_n}.$$
Since $\lambda_{n-1}<1$, by Theorem \ref{thA}, if $\gamma$ is large enough and fixed, we have
{\small
\begin{align}\label{th2-3}
S(f_n):=&\int_\D\left(\int_\D \left(\frac{1-|\xi|}{|1-\overline{z}\xi|}\right)^{\gamma}\frac{|f_n(\xi)|^{q_n}d\mu(\xi)}{(1-|\xi|)^2\eta(\xi)} \right)^\frac{1}{1-\lambda_{n-1}}\eta(z)dA(z) \nonumber\\
 \lesssim& M_n^\frac{1}{1-\lambda_{n-1}} \|f_n\|_{A_{\om_n}^{p_n}}^{\frac{q_n}{1-\lambda_{n-1}}}.
\end{align}
}
Let $\delta$ be large enough, $\{a_k\}_{k=1}^\infty$ be separated and $r$-covering,
$$dV_z(\xi)=\left(\frac{1-|\xi|}{|1-\overline{z}\xi|}\right)^{\gamma}\frac{d\mu(\xi)}{(1-|\xi|)^2\eta(\xi)},
\,\,\,\,b_{a_k}=\frac{1}{\om_n(S_{a_k})^\frac{1}{p_n}}\left(\frac{1-|a_k|}{1-\overline{a_k}z}\right)^\delta.$$
Then, for any $c=\{c_k\}\in l^{p_n}$, from Theorem 3.2 in \cite{ZxXlFhLj2014amsc}, we have
 $$\|f_t\|_{A_{\om_n}^{p_n}}\lesssim \|\{c_k\}\|_{l^{p_n}},$$
 where
$f_t(\xi)=\sum_{k=1}^\infty c_k r_k(t)b_{a_k}(\xi).$
Then, by Fubini's theorem, Kahane's inequality and Khinchin's inequality, we have
\begin{align*}
\int_0^1 S(f_t) dt=&\int_\D \int_0^1  \left\|\sum_{k=1}^\infty c_k r_k(t)b_{a_k}(\xi)\right\|_{L_{V_z}^{q_n}}^\frac{q_n}{1-\lambda_{n-1}}dt \eta(z)dA(z) \\
\approx &\int_\D \left(\int_0^1  \left\|\sum_{k=1}^\infty c_k r_k(t)b_{a_k}(\xi)\right\|_{L_{V_z}^{q_n}}^{q_n}dt \right)^\frac{1}{1-\lambda_{n-1}}\eta(z)dA(z) \\
\approx& \int_\D \left(\int_\D \left(\sum_{k=1}^\infty |c_k|^2 |b_{a_k}(\xi)|^2 \right)^\frac{q_n}{2}dV_z(\xi)
\right)^\frac{1}{1-\lambda_{n-1}}\eta(z)dA(z)\\
\gtrsim& \sum_{j=1}^\infty\int_{D(a_j,r)} \left(\int_{D(a_j,r)} \left( |c_j|^2 |b_{a_j}(\xi)|^2 \right)^\frac{q_n}{2}dV_z(\xi)
\right)^\frac{1}{1-\lambda_{n-1}}\eta(z)dA(z)\end{align*}
 \begin{align*}
\approx&\sum_{j=1}^\infty |c_j|^\frac{q_n}{1-\lambda_{n-1}} \frac{\mu(D(a_j,r))^\frac{1}{1-\lambda_{n-1}}}{\om_n(S_{a_j})^\frac{q_n}{p_n(1-\lambda_{n-1})}\eta(D(a_j,r))^\frac{\lambda_{n-1}}{1-\lambda_{n-1}}}
\\
=&\sum_{j=1}^\infty |c_j|^\frac{q_n}{1-\lambda_{n-1}} \left(\frac{\mu(D(a_j,r))}{\om(D(a_j,r))^\lambda}\right)^\frac{1}{1-\lambda_{n-1}}.
\end{align*}
From (\ref{th2-3}) and the classical duality relation
$$\left(l^\frac{p_n(1-\lambda_{n-1})}{q_n}\right)^*\simeq l^\frac{1-\lambda_{n-1}}{1-\lambda},$$ $\left\{\left(\frac{\mu(D(a_k,r))}{\om(D(a_k,r))^\lambda}\right)^\frac{1}{1-\lambda_{n-1}}\right\}$ defines a bounded functional on $l^\frac{p_n (1-\lambda_{n-1})}{q_n}$ and
$$
 \left\| \left\{\frac{\mu(D(a_k,r))}{\om(D(a_k,r))^\lambda}\right\}\right\|_{l^\frac{1}{1-\lambda}}^\frac{1}{1-\lambda_{n-1}}
= \left\| \left\{\left(\frac{\mu(D(a_k,r))}{\om(D(a_k,r))^\lambda}\right)^\frac{1}{1-\lambda_{n-1}}\right\}\right\|_{l^\frac{1-\lambda_{n-1}}{1-\lambda}}
\lesssim M_n^\frac{1}{1-\lambda_{n-1}}.
$$
Then, by Theorem \ref{thA},  we have
\begin{align*}
\|I_d\|_{A_\om^{1/\lambda}\to L_\mu^1}
&\approx \left(\int_\D \left( \frac{\mu(D(z,r))}{\om(D(z,r))}\right)^\frac{1}{1-\lambda}\om(z)dA(z)\right)^{1-\lambda}\\
&\approx \left(\sum_{k=1}^\infty \frac{\mu(D(a_k,r))^\frac{1}{1-\lambda}}{\om(D(a_k,r))^\frac{\lambda}{1-\lambda}}\right)^{1-\lambda}
\lesssim M_n.
\end{align*}
The proof is complete.
\end{proof}

The proof of Theorem 3 can be deduced in a standard way, see the proof of Theorem 4.1 in \cite{PjZr2015mmj} for example. For the benefits of readers, we prove it here.

\begin{proof}[Proof of Theorem 3]
$(i)\Rightarrow(ii)$. Suppose $(i)$ holds, i.e., $\mu$ is a vanishing $\lambda$-Carleson measure for $A_\om^1$.
If $s\in[0,1)$,  let $\chi_s(z)$ be the  characteristic function of $\{z\in\D:|z|\geq s\}$ and $d\mu_s(z)=\chi_s(z)d\mu(z)$.
By Theorem A, we have
$$\lim_{s\to 1} \|I_d\|_{A_\om^{1/\lambda}\to L_{\mu_s}^1}=\lim\limits_{s\to 1}\|I_d\|_{A_\om^1\to L_{\mu_s}^\lambda}^\lambda=0.$$
By Theorem \ref{th2}, we have
$$
\lim_{s\to 1}\sup_{f_i\in A_{\om_i}^{p_i},i=1,2,\cdots,n}\frac{\int_\D \prod_{i=1}^n |f_i(z)|^{q_i} d\mu_s(z)}{\prod_{i=1}^n \|f_i\|_{A_{\om_i}^{p_i}}^{q_i}}=0.
$$
So, for any given $\varepsilon>0$, there exists a real number $s_0\in(0,1)$ such that
\begin{align}\label{0424-1}
\sup_{f_i\in A_{\om_i}^{p_i},i=1,2,\cdots,n}\frac{\int_\D \prod_{i=1}^n |f_i(z)|^{q_i} d\mu_{s_0}(z)}{\prod_{i=1}^n \|f_i\|_{A_{\om_i}^{p_i}}^{q_i}}<\varepsilon.
\end{align}
Meanwhile, when $i=2,\cdots,n$,  for any $\|f_i\|_{A_{\om_i}^{p_i}}\leq 1$ and $|z|\leq s_0$, it is easy to check that
$$|f_i(z)|\lesssim \frac{\|f_i\|_{A_{\om_i}^{p_i}}}{(1-|z|)^\frac{1}{p_i}\widehat{\om_i}(z)^\frac{1}{p_i}}
\leq \frac{1}{(1-s_0)^\frac{1}{p_i}\widehat{\om_i}(s_0)^\frac{1}{p_i}}.$$
Since $\{f_{1,k}\}_{k=1}^\infty$ converges to 0 uniformly on $s_0\D$, there exists a natural number $K$ such that, for all $k>K$,
\begin{align}\label{0424-2}
\int_{s_0\D} |f_{1,k}(z)|^{q_1}\prod_{i=2}^n |f_i(z)|^{q_i}d\mu(z)<\varepsilon.
\end{align}
 Then, (\ref{0424-1}) and (\ref{0424-2}) imply  $\lim\limits_{k\to\infty} F(k)=0$, i.e.,  $(ii)$ holds.

$(ii)\Rightarrow(iii)$. It is obvious.

$(iii)\Rightarrow(i)$. Suppose $(iii)$ holds.  Then, $\mu$ is a $\lambda$-Carleson measure for $A_\om^1$.
 Otherwise, by Theorem \ref{th2}, there exist  $\{f_{1,k}\}_{k=1}^\infty$, $\cdots$, $\{f_{n,k}\}_{k=1}^\infty$ in the unit ball of $A_{\om_1}^{p_1}$, $\cdots$, $A_{\om_2}^{p_n}$, respectively, such that
 $$\int_{\D} \prod_{i=1}^n \left|\frac{f_{i,k}(z)}{k}\right|^{q_i} d\mu(z)>1.$$
Meanwhile, since  $\lim\limits_{k\to \infty}\|\frac{f_{i,k}}{k}\|_{A_{\om_i}^{p_i}}=0$, by statement $(iii)$, we have
$$\int_{\D} \prod_{i=1}^n \left|\frac{f_{i,k}(z)}{k}\right|^{q_i} d\mu(z)=0.$$
This is a contradiction. So, $\mu$ is a $\lambda$-Carleson measure for $A_\om^1$.
Moreover, when $0<\lambda<1$, by Theorem \ref{thA}, $I_d:A_\om^1\to L_\mu^\lambda$ is   compact.

Suppose $\lambda>1$.  By Lemma 2.4 in \cite{PjaRj2014book}, we can choose $\gamma$ large enough such that
$$\|F_a\|_{A_{\om_i}^{p_i}}^{p_i}\approx \om_i(S_a),\,\,\,\mbox{ for all }\,\,\,i=1,2,\cdots,n \,\,\,\mbox{ and }\,\,\,a\in\D,$$
where $F_a(z)=\left(\frac{1-|a|^2}{1-\overline{a}z}\right)^\gamma$.
Therefore, for $i=1,2,\cdots,n$, $f_{i,a}(z)=\frac{F_a(z)}{\|F_a\|_{A_{\om_i}^{p_i}}}$  is bounded in $A_{\om_i}^{p_i}$ and converges to 0 uniformly on compact subsets of $\D$ as $|a|\to 1$. From $(iii)$, we have
\begin{align}\label{0424-3}
\lim_{|a|\to 1} \int_\D \left(\frac{1-|a|}{|1-\overline{a}z|}\right)^{(q_1+q_2+\cdots+q_n)\gamma}\frac{d\mu(z)}{\om(S_a)^\lambda}=0.
\end{align}
For brief, let $\gamma_n=(q_1+q_2+\cdots+q_n)\gamma$. Then, for any fixed $r>0$, by (\ref{0410-1})  we have
\begin{align} \frac{\mu(D(a,r))}{\om(D(a,r))^\lambda}
\approx &\int_{D(a,r)}\left(\frac{1-|a|}{|1-\overline{a}z|}\right)^{\gamma_n}\frac{d\mu(z)}{\om(S_a)^\lambda} \nonumber \\
\leq &\int_\D\left(\frac{1-|a|}{|1-\overline{a}z|}\right)^{\gamma_n}\frac{d\mu(z)}{\om(S_a)^\lambda}. \nonumber \end{align}
Therefore, (\ref{0424-3}) and Theorem \ref{thA} imply that $\mu$ is a vanishing $\lambda$-Carleson measure for $A_\om^1$. The proof is complete.

\end{proof}

\end{document}